\numberwithin{equation}{section}
\theoremstyle{plain}
\newtheorem{thm}{Theorem}[section]
\newtheorem{prop}[thm]{Proposition}
\newtheorem{coroll}[thm]{Corollary}
\newtheorem{claim}[thm]{Claim}
\theoremstyle{definition}
\newtheorem{defn}[thm]{Definition}
\newtheorem{remark}[thm]{Remark}
\newtheorem{ex}[thm]{Example}
\newcommand{\R}{\mathbb{R}}
\newcommand{\Z}{\mathbb{Z}}
\DeclareMathOperator{\conv}{Conv}
\DeclareMathOperator{\inter}{int}
\DeclareMathOperator{\Vis}{Vis}
\newcommand{\ehr}{\mathrm{Ehr}}
\newcommand{\spt}{\mathrm{Base}}
\def\cir{{\rm cir}}
\def\tree{{\rm Base}}
\def\tcQ{{\tilde{\mathcal{Q}}}}
\title{Graph minors, Ehrhart theory, and a monotonicity property}
\begin{document}

\author{Tam\'as K\'alm\'an}
\address{Department of Mathematics,
Institute of Science Tokyo,
Japan}
\email{kalman@math.titech.ac.jp}

\author{Lilla T\'othm\'er\'esz}
\address{ELTE Eötvös Loránd University, P\'azm\'any P\'eter s\'et\'any 1/C, Budapest, Hungary}
\email{lilla.tothmeresz@ttk.elte.hu}



\date{}

\begin{abstract}
We study the extended root polytope associated to a directed graph. 
We show that under the operations of deletion and contraction of an edge of the graph, none of the coefficients of the $h^*$-polynomial of 
the associated extended root polytope increase.
We examine cases when the $h^*$-polynomial does not change, for instance
when contracting the edges of a minimal directed join in a digraph whose lattice polytope has the Gorenstein property.
\end{abstract}

\maketitle

\section{Introduction}
\label{sec:intro}

Any finite directed graph $D=(V,E)$ has naturally associated to it a \emph{root polytope} $\mathcal{Q}_D$ \cite{smooth_Fano}, defined as the convex hull
\[\mathcal{Q}_D=\conv\{\,\mathbf{x}_e \mid e\in E\,\}\subset\mathbb R^V,\]
where $\mathbf{x}_e$ denotes the vector with coordinate $1$ for the head of $e$, coordinate $-1$ for the tail of $e$, and all other coordinates $0$.
The \emph{extended root polytope} is 
\[\tilde{\mathcal{Q}}_D=\conv(\{\mathbf{0}\}\cup \{\, \mathbf{x}_e\mid e\in E\,\})\subset\mathbb R^V.\]

These polytopes have been intensively studied, both for their algebraic and combinatorial properties, as well as for their applications in physics \cite{Sym_edge_appearance,smooth_Fano,arithm_symedgepoly,DAli,alex,fix_elsorrend,ChenDavis}. A notable special case is when $D$ is a bidirected graph (that is, $\overrightarrow{uv}$ and $\overrightarrow{vu}$ are present in $E$ with equal multiplicity for each $u,v\in V$), when we have $\mathcal{Q}_D=\tcQ_D$, and it is called the \emph{symmetric edge polytope} (of the underlying undirected graph).
A major theme regarding
root polytopes is their Ehrhart theory, and this is also the topic of the present paper. In particular, we study the $h^*$-polynomials of extended root polytopes. (The $h^*$-polynomial of a lattice polytope has nonnegative integer coefficients, which sum to the normalized volume; see section \ref{sec:ehrhart} for the formal definition.)

The 
$h^*$-polynomial of $\tcQ_D$
reveals much about the structure of $D$. The association $D\mapsto h^*_{\tcQ_D}$ is closely related to the Tutte polynomial. More precisely, for an undirected graph $G$, the specialization $T_G(x,1)$ of the Tutte polynomial (that is, the generating function of internal activity \cite{Tutte} over the set of the graph's spanning trees) can be identified with $h^*_{\tcQ_D}$ for the digraph $D$ obtained from $G$ by subdividing each edge and orienting the two new edges toward the subdividing point \cite{hiperTutte,KP_Ehrhart}. The polynomial $h^*_{\tcQ_D}$ 
also has connections to greedoid polynomials \cite{eulerian_greedoid}.
In an earlier paper \cite{fokszam} we presented a graph theoretic
formula for the degree of $h^*_{\tcQ_D}$.

For simplicity, let us denote $h^*_{\tcQ_D}$ by $h^*_D$ and call it the \emph{interior polynomial} of $D$. 
In this paper we establish two natural monotonicity properties of $h^*_{D}$ that hold in connection with the two basic graph minor operations, deletion and contraction.

\begin{thm}\label{thm:h_of_deletion}  
Let $D$ be a 
directed graph. For any edge $e$ of $D$, we have that $h^*_{{D-e}}$ is coefficientwise smaller than or equal to $h^*_{D}$.
\end{thm}

\begin{thm}\label{thm:h_of_contraction}
    Let $D$ be a 
    directed graph. For any edge $e$ of $D$, we have that $h^*_{{D/e}}$ is coefficientwise smaller than or equal to $h^*_{D}$.
\end{thm}

The first of these is a direct consequence of the following fundamental result.

\begin{thm}[Stanley's monotonicity theorem, \cite{Stanley_monotonicity}]
     If $P$ and $Q$ are lattice polytopes such that $P\subseteq Q$, then $h^*_P$ is coefficientwise smaller than or equal to $h^*_Q$.
\end{thm}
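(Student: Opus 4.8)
Although this is Stanley's theorem and in our paper it is used as a black box, here is how I would prove it. The plan is to reduce to the case $\dim P=\dim Q$, and to treat that case via a lattice triangulation of $Q$ having $P$ as a subcomplex, together with a half--open decomposition of the cone over $Q$ centred inside $P$.

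\emph{Reduction to equal dimension.} Suppose $\dim P<\dim Q$. I would choose a lattice point $v$ of $Q$ outside $\operatorname{aff}(P)$ (one exists because $Q$ has lattice vertices and spans more dimensions than $P$), chosen so that $P':=\conv(P\cup\{v\})$ is a unimodular lattice pyramid over $P$. Cutting $nP'$ by the affine subspaces parallel to $\operatorname{aff}(P)$ through $v,2v,3v,\dots$ shows $L_{P'}(n)=\sum_{i=0}^{n}L_P(i)$, whence $\ehr_{P'}(t)=\ehr_{P}(t)/(1-t)$ and therefore $h^*_{P'}=h^*_{P}$. Iterating finitely many times produces a lattice polytope $\widehat P$ with $P\subseteq\widehat P\subseteq Q$, $\dim\widehat P=\dim Q$ and $h^*_{\widehat P}=h^*_{P}$, so it suffices to prove $h^*_{\widehat P}\le h^*_{Q}$.

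\emph{The equidimensional case.} Assume $\dim P=\dim Q=d$; working in the affine hull of $Q$ with its induced lattice we may take both to be full dimensional in $\R^{d}$. I would first fix a lattice triangulation $\mathcal T$ of $Q$ having $P$ as a subcomplex, e.g. a placing triangulation of $Q$ that processes the lattice points of $P$ first; then every maximal cell $\sigma$ of $\mathcal T$ is a lattice $d$-simplex lying either inside $P$ or inside $\overline{Q\setminus P}$, and the cells contained in $P$ triangulate $P$. Placing $Q$ in the hyperplane of height $1$, form the cone $C_Q\subseteq\R^{d+1}$ over it (and likewise $C_P\subseteq C_Q$), and subdivide $C_Q$ into the simplicial cones $C_\sigma$ over the maximal cells. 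Half--opening these cones with respect to a point $\mathbf{p}$ chosen generically in the interior of $P$ gives a genuinely disjoint decomposition $C_Q=\bigsqcup_{\sigma}C_\sigma^{\circ}$; for a lattice $d$-simplex $\sigma$, the lattice--point generating function of $C_\sigma^{\circ}$, graded by height, equals $\operatorname{Box}_\sigma(t)/(1-t)^{d+1}$ with $\operatorname{Box}_\sigma(t)\in\Z_{\ge0}[t]$ enumerating by height the lattice points of a half--open fundamental parallelepiped. Because $\mathbf{p}$ lies in $\operatorname{int}P$, every facet of $\mathcal T$ contained in $\partial P$ is allotted to its $P$-side cell, so the pieces $C_\sigma^{\circ}$ with $\sigma\subseteq P$ reassemble to exactly $C_P$ and the remaining ones to $C_Q\setminus C_P$.

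Summing generating functions and multiplying by $(1-t)^{d+1}$ then yields $h^*_{Q}(t)-h^*_{P}(t)=\sum_{\sigma\not\subseteq P}\operatorname{Box}_\sigma(t)$, a sum of polynomials with nonnegative integer coefficients, which is exactly the assertion $h^*_{P}\le h^*_{Q}$. The step that I expect to require real care is the last one of the previous paragraph: checking that the half--open decomposition of $C_Q$ restricts to an honest disjoint decomposition of $C_P$. Concretely, if a point of $C_P$ were caught by a piece $C_\sigma^{\circ}$ with $\sigma\not\subseteq P$, it would lie on a facet of $C_\sigma$ whose supporting hyperplane extends a facet hyperplane of $P$; but then $\mathbf{p}$ would have to lie on the outer side of that hyperplane, contradicting $\mathbf{p}\in\operatorname{int}P$. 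The remaining ingredients --- the placing triangulation, the half--open decomposition with its box--polynomial identity, and the pyramid computation --- are standard Ehrhart theory.
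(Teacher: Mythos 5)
The paper invokes this result as a black box, citing \cite{Stanley_monotonicity}, so I am comparing your sketch with the standard proofs in the literature. Your equidimensional argument is essentially the standard elementary route (in the spirit of half-open/irrational decompositions, rather than Stanley's original Cohen--Macaulay argument): triangulate $Q$ by a placing triangulation so that $P$ is a subcomplex, half-open the simplicial cones from a generic point of $\operatorname{int}P$, and read off $h^*_Q-h^*_P$ as a sum of box polynomials. That part is fine as a sketch; the reassembly worry you flag is handled most cleanly by noting that the half-open pieces of the cells inside $P$ already partition $C_P$ (the same partition lemma applied to $P$ with the same point $\mathbf{p}$), so disjointness forces the remaining pieces to miss $C_P$.

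The genuine gap is the reduction to equal dimension. You need a lattice point $v\in Q$ at lattice distance $1$ from $\operatorname{aff}(P)$, and such a point need not exist: take $P=\conv\{(0,0,0),(1,0,0),(0,1,0)\}$ and $Q$ the Reeve simplex $\conv\{(0,0,0),(1,0,0),(0,1,0),(1,1,k)\}$ with $k\ge 2$. The only lattice points of $Q$ are its four vertices, so the only candidate apex is $(1,1,k)$, which sits at lattice height $k\ge 2$; the resulting pyramid is $Q$ itself and $h^*_Q=1+(k-1)t^2\neq 1=h^*_P$. Worse, the strategy itself (not just this construction) fails here: every full-dimensional lattice polytope $\widehat P$ with $P\subseteq\widehat P\subseteq Q$ must use the vertex $(1,1,k)$ and hence equals $Q$, so no intermediate $\widehat P$ with $h^*_{\widehat P}=h^*_P$ exists. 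Nor can you simply run the equidimensional argument with $\mathbf{p}\in\operatorname{relint}P$ when $\dim P<\dim Q$: such a point lies on the hyperplanes spanned by cell facets containing a top-dimensional cell of the triangulation of $P$, so the genericity needed for the half-open bookkeeping breaks down. The lower-dimensional case therefore requires a genuinely different ingredient --- Stanley's algebraic proof, Beck--Sottile's irrational shifts, or the Betke--McMullen/local-$h^*$ machinery. Note that this is not a vacuous case for the present paper: in Theorem \ref{thm:h_of_deletion}, deleting a bridge drops the dimension of $\tcQ_{D-e}$; for that particular application, though, the coning argument used in the proof of Proposition \ref{prop:equality_in_D-e} (a lattice pyramid has the same $h^*$ as its base) settles it directly.
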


\begin{proof}[Proof of Theorem \ref{thm:h_of_deletion}]
$\tcQ_{D-e}\subseteq \tcQ_D$ by definition. Now the statement follows from Stanley's monotonicity theorem.
\end{proof}

The proof of Theorem \ref{thm:h_of_contraction} is far more involved and it will occupy much of the paper. 
It is easy to see that $\tcQ_{D/e}$ is the projection of $\tcQ_D$ 
along the direction 
of $\mathbf{x}_e$.
When a lattice polytope is projected onto another one of lower dimension, in general the $h^*$-vector does not behave monotonically, not even if the kernel of the projection is spanned by some of the edge directions. For example, the triangle $\Delta$ with vertices $(0,0)$, $(0,-1)$, and $(3,1)$ has $h^*_\Delta(x)=1+x+x^2$; the projection of $\Delta$ along the second axis is the line segment $S$ between $0$ and $3$, where $h^*_S(x)=1+2x$.
Due to the existence of such examples, our proof of Theorem \ref{thm:h_of_contraction} will have to rely on certain special properties of $\tcQ_D$. 
We note that those same features also allow us to generalize
Theorem \ref{thm:h_of_contraction} 
from directed graphs to oriented regular matroids
in a straightforward way, even though we will not elaborate on the details here. Theorem \ref{thm:h_of_deletion} is just as obvious in the matroid context, too.

The idea of the proof is the following: One can dissect $\tcQ_D$ into simplices that correspond to certain spanning 
forests of $D$. We will call a set of spanning forests 
that yields a dissection a \emph{dissecting forest 
set}. 
We will use a formula for $h^*_{D}$ (a straightforward generalization of \cite[Theorem 1.8]{fix_elsorrend}) that gives the $h^*$-polynomial as the generating function of a certain activity statistic over any dissecting forest set; see Theorem \ref{thm:interior_poly_fixed_order} and the formula \eqref{eq:h*D}. With that, the key is to construct related dissections and related activities for $\tcQ_D$ and for $\tcQ_{D/e}$, so that we can compare the formulas for the two polytopes.

As a byproduct of our computation, 
we also obtain that $h^*_D$ is multiplicative over disjoint unions of digraphs, see Proposition \ref{prop:szorzat}. This is less obvious than one might expect. We mention that it can also be derived from Stapledon's weighted Ehrhart theory \cite{stapledon_weighted}, in particular from \cite[Lemma 4]{stapledon_product}
by noting that the weighted and ordinary $h^*$-polynomials coincide for extended root polytopes. 

We also identify the cases when there is equality.

\begin{prop}\label{prop:equality_in_D-e}
Let $D$ be a 
directed graph. We have $h^*_D=h^*_{D-e}$ if and only if $e$ is a loop or bridge of $D$, or there is another edge in $D$ parallel to $e$.
\end{prop}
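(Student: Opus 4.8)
The plan is to prove the ``if'' direction by examining the three listed situations one at a time, and the ``only if'' direction by combining a volume computation with Theorem~\ref{thm:h_of_deletion}; throughout I read ``$f$ is parallel to $e$'' as ``$f$ and $e$ have the same head and the same tail'', so that $\mathbf{x}_f=\mathbf{x}_e$ (this is the interpretation for which the statement holds, as antiparallel edges already show). Two of the three sufficient conditions are combinatorially trivial: if $e$ is a loop then $\mathbf{x}_e=\mathbf{0}$, and if $e$ has a parallel edge $f\ne e$ then $\mathbf{x}_e=\mathbf{x}_f$; in either case $\mathbf{x}_e$ already lies among the points whose convex hull defines $\tcQ_{D-e}$, so $\tcQ_D=\tcQ_{D-e}$ as subsets of $\mathbb{R}^V$, and in particular $h^*_D=h^*_{D-e}$.

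The substantive case is that $e$ is a bridge, and here the idea is to realize $\tcQ_D$ as a \emph{lattice pyramid} over $\tcQ_{D-e}$. This suffices because forming a lattice pyramid does not change the $h^*$-polynomial: if $\widehat{P}=\conv(P\cup\{\mathbf{w}\})$ with $\mathbf{w}$ a lattice point lying at lattice height $1$ over $\operatorname{aff}(P)$, then $\ehr_{\widehat{P}}(t)=\sum_{j=0}^{t}\ehr_{P}(j)$ and hence $h^*_{\widehat{P}}=h^*_{P}$. To set this up, let $e$ run from $u$ to $v$ and let $V=V_1\sqcup V_2$ be the partition of the vertex set into the two weakly connected components of $D-e$, with $u\in V_1$ and $v\in V_2$. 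Every edge of $D$ other than $e$ stays within $V_1$ or within $V_2$, so $\operatorname{aff}(\tcQ_{D-e})=\operatorname{span}\{\mathbf{x}_f:f\in E\setminus\{e\}\}$ lies in $W:=\{\mathbf{x}\in\mathbb{R}^V:\sum_{w\in V_1}\mathbf{x}_w=0=\sum_{w\in V_2}\mathbf{x}_w\}$, and a dimension count (both spaces are $(|V|-2)$-dimensional, using that the two components are weakly connected) shows $\operatorname{aff}(\tcQ_{D-e})=W$. Since the $V_1$-coordinates of $\mathbf{x}_e$ sum to $-1\ne 0$, we have $\mathbf{x}_e\notin W$, so $\tcQ_D=\conv(\tcQ_{D-e}\cup\{\mathbf{x}_e\})$ is a pyramid with apex $\mathbf{x}_e$ over the base $\tcQ_{D-e}$; and since the linear functional $\mathbf{x}\mapsto\sum_{w\in V_1}\mathbf{x}_w$ cuts out $W$, equals $-1$ at $\mathbf{x}_e$, and equals $1$ at the lattice point $-\mathbf{x}_e$, the apex lies at lattice height $1$ over the base, so this pyramid is a lattice pyramid and $h^*_D=h^*_{D-e}$. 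This bridge case is the one genuinely delicate point of the proof --- one has to notice that deleting a bridge does not merely shrink $\tcQ_D$ but collapses it onto a lattice pyramid base, and to verify the lattice-height-$1$ condition that licenses the $h^*$-invariance of lattice pyramids --- whereas the remaining two cases and the converse below are routine.

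For the converse, suppose $e$ is neither a loop nor a bridge and has no parallel edge. Since $e$ is not a bridge, $D-e$ is still weakly connected, so $\tcQ_{D-e}$ and $\tcQ_D$ are both full-dimensional in the lattice affine subspace $\{\mathbf{x}\in\mathbb{R}^V:\sum_{v}\mathbf{x}_v=0\}$, with $\tcQ_{D-e}\subseteq\tcQ_D$. This inclusion is proper: the functional $\phi(\mathbf{x})=\langle\mathbf{x}_e,\mathbf{x}\rangle$ satisfies $\phi(\mathbf{x}_e)=\|\mathbf{x}_e\|^2=2$ (as $e$ is not a loop), while $\phi(\mathbf{0})=0$ and $\phi(\mathbf{x}_f)=\langle\mathbf{x}_e,\mathbf{x}_f\rangle\le 1$ for every edge $f\ne e$ (the value $2$ would force $f$ to be parallel to $e$, which is excluded), so $\phi\le 1$ on $\tcQ_{D-e}$ and hence $\mathbf{x}_e\in\tcQ_D\setminus\tcQ_{D-e}$. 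A proper inclusion of full-dimensional lattice polytopes strictly increases the normalized volume, so the normalized volumes --- that is, the values $h^*_{D-e}(1)$ and $h^*_D(1)$ --- satisfy $h^*_{D-e}(1)<h^*_D(1)$. Since $h^*_D-h^*_{D-e}$ has nonnegative coefficients by Theorem~\ref{thm:h_of_deletion} and takes a positive value at $1$, some coefficient is strictly positive, and therefore $h^*_D\ne h^*_{D-e}$, completing the proof.
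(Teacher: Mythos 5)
Your proof is correct and follows essentially the same route as the paper: the loop/parallel cases via $\tcQ_{D-e}=\tcQ_D$, the bridge case by exhibiting $\tcQ_D$ as a pyramid over $\tcQ_{D-e}$ with apex $\mathbf{x}_e$, and the converse by noting that without these hypotheses the two polytopes have equal dimension but $\tcQ_{D-e}\subsetneq\tcQ_D$, so the normalized volumes $h^*(1)$ differ. The only (minor) divergence is in the bridge case, where the paper cones a unimodular dissection of $\tcQ_{D-e}$ while you invoke the lattice-pyramid invariance of the $h^*$-polynomial, verified with the integral height functional $\mathbf{x}\mapsto\sum_{w\in V_1}\mathbf{x}_w$; in addition, your separating functional $\langle\mathbf{x}_e,\cdot\rangle$ makes explicit the paper's terse claim that $\mathbf{x}_e\notin\tcQ_{D-e}$ when $e$ has no parallel copy.
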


\begin{prop}\label{prop:equality_in_D/e}
    Let $D$ be a 
    directed graph, and let $e$ be a non-loop edge in it. We have $h^*_D = h^*_{D/e}$ if and only if $\mathbf{x}_e$ is contained by each facet of $\tcQ_D$ that does not contain $\mathbf{0}$.
\end{prop}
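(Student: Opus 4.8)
\emph{Reduction to normalized volumes.}
Theorem~\ref{thm:h_of_contraction} already gives that $h^*_{D/e}$ is coefficientwise $\le h^*_D$, and two polynomials related in this way coincide as soon as the sums of their coefficients agree. Since $h^*_P(1)$ is the normalized volume $\operatorname{nvol}(P)$, I would first reduce the statement to showing that $h^*_D=h^*_{D/e}$ if and only if $\operatorname{nvol}(\tcQ_D)=\operatorname{nvol}(\tcQ_{D/e})$, and then characterize the latter equality geometrically. (The case where $e$ has a parallel edge $e'$ can be dispatched separately, since $\mathbf x_{e'}=\mathbf x_e$ and passing to $D/e$ only turns $e'$ into a loop, which does not alter $\tcQ_{D/e}$; this parallels the corresponding clause of Proposition~\ref{prop:equality_in_D-e}. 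So assume from now on that $e$ has no parallel edge.)

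\emph{Counting via dissecting tree sets.}
Fix a dissecting tree set $\mathcal T$ of $D$, so $\operatorname{nvol}(\tcQ_D)=|\mathcal T|$, the simplices being $\Delta_T=\conv(\{\mathbf 0\}\cup\{\mathbf x_f:f\in T\})$, each unimodular. Write $\sigma_T=\conv\{\mathbf x_f:f\in T\}$ for the facet of $\Delta_T$ opposite $\mathbf 0$. The key elementary observation is that the $\sigma_T$, $T\in\mathcal T$, form a unimodular triangulation of the union of those facets of $\tcQ_D$ that do not contain $\mathbf 0$: no $\sigma_T$ contains $\mathbf 0$, so it cannot be an interior wall of the dissection and must lie on $\partial\tcQ_D$; disjointness of the interiors of the $\Delta_T$ forces disjointness of the interiors of the $\sigma_T$; and since $\tcQ_D=\bigcup_T\conv(\{\mathbf 0\}\cup\sigma_T)$, the $\sigma_T$ sweep out exactly the facets of $\tcQ_D$ away from $\mathbf 0$. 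Because $\mathbf x_e$ is a vertex of $\tcQ_D$ (a short check comparing the head‑ and tail‑coordinates of $e$), it is extreme among the $\mathbf x_f$, hence $\mathbf x_e\in\sigma_T$ precisely when $e\in T$. On the other side, the argument proving Theorem~\ref{thm:h_of_contraction} shows that the projections $\pi(\Delta_T)=\Delta_{T/e}$ of the simplices with $e\in T$ dissect $\tcQ_{D/e}$, so $\operatorname{nvol}(\tcQ_{D/e})=|\{T\in\mathcal T:e\in T\}|$. Combining, $\operatorname{nvol}(\tcQ_D)-\operatorname{nvol}(\tcQ_{D/e})=|\{T\in\mathcal T:e\notin T\}|$, and hence $h^*_D=h^*_{D/e}$ if and only if some dissecting tree set of $D$ consists only of trees containing $e$.

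\emph{From trees to facets.}
It remains to show that a dissecting tree set of $D$ all of whose trees contain $e$ exists if and only if every facet of $\tcQ_D$ not containing $\mathbf 0$ contains $\mathbf x_e$. For the forward direction: if $e\in T$ for all $T\in\mathcal T$, then $\mathbf x_e\in\sigma_T$ for all $T$, and since each facet of $\tcQ_D$ away from $\mathbf 0$ is tiled by some of the $\sigma_T$, it contains $\mathbf x_e$. For the converse, assume the facet condition, equivalently that every facet of $\tcQ_D$ not containing $\mathbf x_e$ contains $\mathbf 0$. Take the pulling triangulation of $\tcQ_D$ that pulls $\mathbf x_e$ first and $\mathbf 0$ second: pulling $\mathbf x_e$ makes every maximal simplex of the form $\mathbf x_e\ast\rho$ with $\rho$ inside a facet of $\tcQ_D$ not containing $\mathbf x_e$, hence, by hypothesis, a facet containing the vertex $\mathbf 0$; pulling $\mathbf 0$ next then puts $\mathbf 0$ into $\rho$, so every maximal simplex contains both $\mathbf 0$ and $\mathbf x_e$. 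Any such maximal simplex is full‑dimensional with vertices among $\{\mathbf 0\}\cup\{\mathbf x_f\}$ and contains $\mathbf 0$, so its edge‑vectors $\mathbf x_f$ from $\mathbf 0$ form a basis of the ambient lattice — i.e.\ a spanning tree $T$ — and the simplex equals $\Delta_T$ with $e\in T$. This is the required dissecting tree set, and it finishes the proof modulo the point below.

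\emph{Main obstacle.}
The delicate step, and the one that should invoke the "special properties of $\tcQ_D$" stressed in the introduction, is the claim $\operatorname{nvol}(\tcQ_{D/e})=|\{T\in\mathcal T:e\in T\}|$ for the $\mathbf x_e$‑adapted dissecting tree set built in the converse direction — or, what amounts to the same thing, for every dissecting tree set, not just for the particular dissection arising in the proof of Theorem~\ref{thm:h_of_contraction}. Concretely one must show that the projected simplices $\pi(\Delta_T)$ with $e\in T$ have pairwise disjoint interiors inside $\tcQ_{D/e}$; this can fail for general lattice polytopes and edge‑direction projections (cf.\ the triangle example in the introduction), so it is exactly here that the structure of $\tcQ_D$ enters, essentially the same structure that drives the proof of Theorem~\ref{thm:h_of_contraction}. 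A second, purely bookkeeping, nuisance is that $\mathbf 0$ need not be a vertex of $\tcQ_D$ — this already occurs when $e$ is a bridge and $D-e$ is not a forest, a case where the facet condition does hold and equality does occur — so the "pull $\mathbf 0$ second" construction must be replaced by a variant anchored at the minimal face of $\tcQ_D$ containing $\mathbf 0$; I expect this to be routine but it has to be checked.
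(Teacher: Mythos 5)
Your plan (reduce to normalized volumes via Theorem \ref{thm:h_of_contraction}, count volume by a dissecting tree set, and translate the facet condition into ``every tree of the dissection contains $e$'') is sensible, and your observation that the opposite facets $\sigma_T=\conv\{\mathbf x_f: f\in T\}$ tile the facets of $\tcQ_D$ away from $\mathbf 0$ is essentially correct. But there is a genuine gap, and it sits exactly at the step you flag: the identity $\operatorname{nvol}(\tcQ_{D/e})=|\{T\in\mathcal T: e\in T\}|$ is never proved for the tree sets you actually use. For ``facet condition $\Rightarrow$ equality'' you need the projections $p(\tcQ_T)$, $e\in T$, of your pulling-triangulation cells to have pairwise disjoint interiors (to get $\operatorname{nvol}(\tcQ_{D/e})\ge|\mathcal T|$); for ``equality $\Rightarrow$ facet condition'', as you set it up with an \emph{arbitrary} dissecting tree set, you additionally need these projections to cover $\tcQ_{D/e}$, so that a tree missing $e$ forces a strict volume drop. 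Neither statement is established, and you yourself note the analogous claim fails for general lattice polytopes, so the proposal is incomplete precisely at its load-bearing step. (The secondary issue that $\mathbf 0$ need not be a vertex of $\tcQ_D$ is also left unchecked, though it is more of a technicality: your hypothesis forces every facet missing $\mathbf x_e$ to contain $\mathbf 0$, so a pulling construction on the point configuration $\{\mathbf 0\}\cup\{\mathbf x_f\}$ can still be arranged.)

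The paper avoids all of this by never leaving the specific signature dissection $\cT=\tree(D,\cir^w)$ with $e$ minimal in the edge order: the proof of Theorem \ref{thm:h_of_contraction} has already shown that $\{T-e: T\in\cT,\ e\in T\}$ equals $\tree(D/e,\kappa/e)$ and is a dissecting tree set of $D/e$, so $h^*_D=h^*_{D/e}$ iff every $T\in\cT$ contains $e$. The ``only if'' direction is then your generic-point-on-a-facet argument applied to this particular $\cT$; the ``if'' direction is proved not by building a new triangulation at all, but combinatorially: a tree $T\in\cT$ with $e\notin T$ yields an admissible layering $\ell$ with $\ell\cdot\mathbf x_f=1$ for $f\in T$, the facet condition forces $\ell\cdot\mathbf x_e=1$, hence $|\sigma(C(T,e))^+|=|\sigma(C(T,e))^-|$, which contradicts $e\in\sigma(C(T,e))^+$ because the $\pi$-minimal edge $e$ would then lie on the negative arc by the choice of $w$. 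If you want to rescue your route instead, the missing interior-disjointness does admit a short convexity argument special to this situation: when $e\in T_1\cap T_2$, both simplices contain the full fiber segment $[\mathbf 0,\mathbf x_e]$ of the projection, and if their images shared an interior point, sliding that point toward $\mathbf 0$ produces overlapping fibers of positive length and hence a common interior point of $\tcQ_{T_1}$ and $\tcQ_{T_2}$; but for the ``only if'' direction you should either prove the covering claim or simply run your facet argument on the paper's dissection, for which the volume count is already known. As submitted, asserting the count for an arbitrary dissecting tree set is unjustified.
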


The last assumption on $e$ can be rephrased 
in purely graph-theoretical terms, too, but not in a particularly appealing way. 
There is, however, a simple sufficient condition of equality, see Proposition \ref{prop:elegseges}, stating that for any cycle containing $e$, there are at least as many edges along it that point in the opposite cyclic direction to $e$ as there are edges pointing in the same direction. 

In addition to the above, we characterize digraphs whose extended root polytopes have the so called Gorenstein property, and point out that in the Gorenstein case, our results fit nicely with the existing literature of Gorenstein polytopes. In particular, `special simplices' have a nice combinatorial interpretation in our context, and through Proposition \ref{prop:equality_in_D/e},
Theorem \ref{thm:Gorenstein_special_projection_root_polytope}
gives a graph theoretic interpretation for
the theorem \cite{BN_comb_mirror_sym} (in the special case of extended root polytopes) that the projection of a Gorenstein polytope along a special simplex yields a reflexive polytope with the same $h^*$-polynomial. 
More precisely, we find that 
any minimal directed join of a Gorenstein digraph gives rise to a special simplex and the projection along it is equivalent to contracting the edges of the dijoin; with that, $D$ becomes strongly connected and thus $\tcQ_D$ reflexive, while the $h^*$-polynomial does not change.

The structure of the paper is as follows: Section \ref{sec:prelim} contains some definitions and fundamental results about the extended root polytope. In Section \ref{sec:h-vector_formula}, we give a formula for $h^*_D$ in terms of certain activities. 
In Section \ref{sec:cycle_signa_and_triang}, we recall the basics of acyclic circuit signatures, and explain how extended root polytopes can be dissected using these signatures. Section \ref{sec:proof} puts together these ingredients and proves Theorem \ref{thm:h_of_contraction}, as well as the multiplicativity of the interior polynomial over disjoint unions. In Section \ref{sec:equality}, we characterize the cases 
of equality and mention some classes of examples when it holds. Section \ref{sec:gorenstein} discusses digraphs with the Gorenstein property.

\section{Preliminaries}\label{sec:prelim}

\subsection{Graph notations}

A directed graph (digraph for short) is \emph{weakly connected} if the undirected graph obtained by forgetting the orientations is connected.

A digraph is \emph{strongly connected} if there is a directed path from $u$ to $v$, as well as from $v$ to $u$, for any pair of vertices $u$ and $v$.

A \emph{cut} of a digraph is a non-empty set of edges $C^*$ so that there is a partition $V_0\sqcup V_1$ of the vertices such that $C^*$ contains exactly the edges going between $V_0$ and $V_1$. In this case we call $V_0$ and $V_1$ the \emph{shores} of the cut. A cut is \emph{elementary} if it is minimal with respect to inclusion among cuts, that is, if its 
removal increases the number of connected components by exactly $1$.
The cut is called \emph{directed} if either each edge points from $V_0$ to $V_1$ or each edge points from $V_1$ to $V_0$.

An edge that forms a one-element cut is called a \emph{bridge}.
A set of edges is called a \emph{directed join}, or \emph{dijoin} for short, if it contains at least one edge from each directed cut.

A \emph{spanning tree} $T$ of a digraph $D$ is a subgraph whose underlying undirected graph is a tree that contains all vertices. (In particular, the orientation does not play a role in the definition.) Only connected graphs have spanning trees; in general, we consider subgraphs consisting of one spanning tree from each connected component, and call these \emph{spanning forests}. We will identify spanning trees and spanning forests with their edge sets, e.g., write $e\in F$ if the edge $e$ is in the spanning forest $F$. We denote the set of spanning forests of the digraph $D$ by $\spt(D)$.

\subsection{Definition of the $h^*$-polynomial}
\label{sec:ehrhart}

Let $Q\subset\mathbb{R}^n$ be a $d$-dimensional lattice polytope (that is, its vertices are in $\mathbb{Z}^n$).
The \emph{$h^*$-polynomial} (also commonly called the \emph{$h^*$-vector}) of $Q$ is the polynomial $\sum_{i=0}^d h^*_i t^i$ defined by Ehr\-hart's identity 
\begin{equation}
\label{eq:h-csillag}
\sum_{i=0}^d h^*_i t^i = (1-t)^{d+1} \ehr_Q(t),
\quad\text{where}\quad 
\ehr_Q(t)=\sum_{k=0}^\infty|(k\cdot Q)\cap\Z^n|\,t^k
\end{equation}
is the so called \emph{Ehrhart series} of $Q$. We note that $h^*_0=1$ whenever $d\ge0$, i.e., whenever $Q$ is non-empty.

Intuitively, the $h^*$-polynomial can be thought of as a refinement of volume. Indeed, $h^*(1)$ (that is, the sum of the coefficients) is equal to the normalized volume of the polytope, where by normalized we mean that the volume of a  $d$-dimensional unimodular simplex is $1$. For a more detailed introduction to $h^*$-polynomials, see \cite[Chapters 3 and 10]{BeckRobbins}.

\subsection{The extended root polytope}

Here, we gather results on the dimension and facets of extended root polytopes. Before citing these, let us remark that 
we may assume that $D$ is loopless and does not have any parallel edges. 

\begin{claim}\label{cl:loop_and_parallel_edge}
If $e$ is a loop edge, or 
if there is at least one more edge 
with the same initial and terminal points as $e$,
then $\tcQ_D=\tcQ_{D-e}$.    
\end{claim}

\begin{proof}
If $e$ is a loop then $\mathbf{x}_e=\mathbf{0}$.
Hence if $e$ is a loop or if $e$ has parallel copies, then $\tcQ_{D-e}$ has the same generators as $\tcQ_{D}$.
\end{proof}

\begin{prop}\cite{semibalanced,numata}\label{prop:root_poly_dim}
    For a weakly connected digraph $D$, we have $\dim(\tcQ_D)=|V|-1$. More generally $\dim(\tcQ_D)=|V(D)|-c(D)$, where $c(D)$ is the number of weakly connected components of $D$.
\end{prop}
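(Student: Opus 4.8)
The plan is to reduce this dimension computation to a rank computation in linear algebra. Since $\mathbf{0}\in\tcQ_D$, the affine hull of $\tcQ_D$ coincides with the linear span $W=\operatorname{span}\{\mathbf{x}_e\mid e\in E\}\subseteq\mathbb R^V$, so it suffices to prove $\dim W=|V|-c(D)$; equivalently, $\dim W$ is just the rank of the signed incidence matrix of $D$, but I will spell out a self-contained argument.

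For the upper bound, observe that each generator $\mathbf{x}_e$ with $e=\overrightarrow{uv}$ equals $\mathbf{1}_v-\mathbf{1}_u$: its support lies within a single weakly connected component of $D$, and its coordinates sum to $0$. Hence $W$ is contained in the subspace
\[U=\{\,x\in\mathbb R^V : \textstyle\sum_{v\in C}x_v=0\ \text{ for every weakly connected component } C \text{ of } D\,\},\]
which has dimension $|V|-c(D)$, since it is cut out by one linear equation per component and these equations involve pairwise disjoint sets of coordinates, hence are independent. Therefore $\dim W\le|V|-c(D)$.

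For the matching lower bound, choose a spanning forest $F$ of $D$, i.e.\ a spanning tree in each weakly connected component, so that $|F|=|V|-c(D)$. I claim the vectors $\{\mathbf{x}_e\mid e\in F\}$ are linearly independent; together with the previous paragraph this yields $\dim W=|V|-c(D)$. To prove the claim, suppose $\sum_{e\in F}\lambda_e\mathbf{x}_e=\mathbf{0}$. If $F$ has at least one edge, then the underlying undirected forest has a leaf vertex $v$, incident in $F$ to a unique edge $e$; reading off the $v$-coordinate of the relation gives $\pm\lambda_e=0$, so $\lambda_e=0$. Deleting $v$ and $e$ leaves a strictly smaller spanning forest of $D-v$, and we conclude by induction on $|F|$. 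Specializing to $c(D)=1$ recovers $\dim\tcQ_D=|V|-1$.

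There is no genuinely hard step here: the only point requiring (routine) care is the leaf-peeling argument establishing linear independence of the edge vectors of a spanning forest, which is entirely standard. If one prefers, the whole statement can instead be quoted from the cited references \cite{semibalanced,numata}, the argument above being included only for completeness.
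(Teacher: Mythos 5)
Your proof is correct. The paper itself offers no argument for this proposition --- it is quoted from the cited references --- and your reduction is exactly the standard one: since $\mathbf{0}\in\tcQ_D$, the dimension equals the rank of the span of the vectors $\mathbf{x}_e$ (i.e.\ of the signed incidence matrix), the upper bound coming from the one zero-sum equation per weakly connected component and the lower bound from linear independence of the edge vectors of a spanning forest via leaf-peeling. Nothing is missing; the only observation worth keeping in mind is that loops contribute $\mathbf{x}_e=\mathbf{0}$ and so are harmless to both bounds, which your argument implicitly handles since the spanning forest avoids them.
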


For the description of the facets, we need some additional notions. 
First of all, we will identify our vector spaces and their duals by using the standard dot product. 
To a cut $C^*$ with shores $V_0$ and $V_1$, we associate a functional/vector $f_{C^*}$ by
defining $f_{C^*}(v)=1$ for $v\in V_1$ and $f_{C^*}(v)=0$ for $v\in V_0$. Note that if $C^*$ is directed with each edge pointing from $V_0$ to $V_1$, then $f_{C^*}\cdot \mathbf{x}_e=1$ if $e\in C^*$, and $f_{C^*}\cdot \mathbf{x}_e=0$ if $e\notin C^*$.

An \emph{admissible layering} is a function $\ell\colon V\to\Z$, or in other words, a vector $\ell\in\Z^V$, so that $\ell\cdot\mathbf x_e\le1$ for all edges $e$ of $D$, and the edges $e$ with $\ell\cdot\mathbf x_e=1$ (which we will sometimes call the \emph{tight} edges with respect to $\ell$) form a spanning
subgraph (i.e., include a spanning forest) of $D$. We call two 
admissible layerings 
\emph{equivalent} if they differ by some 
function that is constant on every connected component of $D$.

It is easy to see that the extended root polytope of a digraph is the so called free sum of those of its connected components. Here if $\mathbf0\in R\subset U$ and $\mathbf0\in S\subset W$ are convex sets in the real vector spaces $U$ and $W$, respectively, then their \emph{free sum} is 
\[R\oplus S=\conv\left((R\times\{\mathbf0\})\cup(\{\mathbf0\}\times S)\right)\subset U\oplus W.\]
In an earlier paper we gave the facet description of the extended root polytope in the weakly connected case \cite[Corollary 2.11]{fokszam}. From that we readily obtain the following.

\begin{prop}
	\label{prop:facets}
	For any digraph $D$, the facets of $\tcQ_D$ are as follows: 
	\begin{enumerate}
		\item \label{partone}
		$C^*\mapsto\{\mathbf x\in\tcQ_D\mid f_{C^*}\cdot\mathbf x=0\}$ gives a one-to-one correspondence between the elementary directed cuts of $D$ and facets of $\tcQ_D$ containing $\mathbf0$. 
        		\item \label{parttwo}
		$\ell\mapsto\{\mathbf x\in\tcQ_D\mid\ell\cdot\mathbf x=1\}$ induces a bijection between the equivalence classes of admissible layerings and facets of $\tcQ_D$ that do not contain $\mathbf0$.
	\end{enumerate}
\end{prop}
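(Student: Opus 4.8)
The plan is to exhibit the two promised families of valid inequalities explicitly and then show that every facet comes from exactly one member of exactly one family, using throughout the standard dictionary between edge subsets and their spans: for a set $S$ of edges, the linear span of $\{\mathbf{x}_e:e\in S\}$ has dimension $|V|-c(S)$, where $c(S)$ is the number of weakly connected components of the spanning subgraph $(V,S)$, and the vectors $\{\mathbf{x}_e:e\in T\}$ of a spanning tree $T$ form a lattice basis of $\mathbf{1}^{\perp}\cap\Z^{V}$ (this lattice being saturated in $\Z^V$, since $\Z^V/(\mathbf{1}^\perp\cap\Z^V)\cong\Z$ via coordinate sum). First I would record that $\tcQ_D$ lies in the hyperplane $\{\mathbf{x}:\mathbf{1}\cdot\mathbf{x}=0\}$ and, by Proposition \ref{prop:root_poly_dim}, is full-dimensional there, so each facet is cut out by an affine functional that is unique up to a positive scalar and up to adding a multiple of $\mathbf{1}$. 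The two computations $f_{C^*}\cdot\mathbf{x}\ge 0$ on $\tcQ_D$ for a directed cut $C^*$ (equality exactly at $\mathbf{0}$ and at the $\mathbf{x}_e$ with $e\notin C^*$) and $\ell\cdot\mathbf{x}\le 1$ on $\tcQ_D$ for an admissible layering $\ell$ (equality exactly at the $\mathbf{x}_e$ with $\ell\cdot\mathbf{x}_e=1$) are immediate from the definitions.

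For the facets containing $\mathbf{0}$: given such a facet $F$, pick $\ell\in\R^V$ with $\ell\cdot\mathbf{x}\le 0$ on $\tcQ_D$ and $F=\{\mathbf{x}\in\tcQ_D:\ell\cdot\mathbf{x}=0\}$, and set $E_0=\{e:\ell\cdot\mathbf{x}_e=0\}$. The vertices of $F$ lie in $\{\mathbf{0}\}\cup\{\mathbf{x}_e:e\in E_0\}$, so $|V|-2=\dim F=|V|-c(E_0)$, i.e.\ $(V,E_0)$ has exactly two components, with vertex sets $V_0,V_1$. Since $\ell\cdot\mathbf{x}_e=0$ on $E_0$ and each $V_i$ is connected within $E_0$, the function $\ell$ is constant on $V_0$ and on $V_1$; hence every edge inside a single $V_i$ lies in $E_0$, so $C^*:=E\setminus E_0$ is exactly the set of edges between $V_0$ and $V_1$ — a cut whose shores each induce a weakly connected subgraph, i.e.\ an elementary cut. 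The two constants must differ (else $C^*=\emptyset$ and $F$ is not proper), and the sign condition forces all edges of $C^*$ to cross the same way, so the cut is directed and $\ell\equiv -\lambda f_{C^*}$ for some $\lambda>0$ modulo $\mathbf 1$; thus $F=\{\mathbf{x}\in\tcQ_D:f_{C^*}\cdot\mathbf{x}=0\}$. Conversely, for an elementary directed cut the same dimension count — now using that \emph{both} shores are connected — shows $f_{C^*}\cdot\mathbf{x}=0$ defines a facet, and $C^*$ is recovered as $\{e:\mathbf{x}_e\notin F\}$, giving the bijection.

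For the facets not containing $\mathbf{0}$: given such a facet $F$, pick $\ell$ with $\ell\cdot\mathbf{x}\le c$ on $\tcQ_D$ and $F=\{\mathbf{x}\in\tcQ_D:\ell\cdot\mathbf{x}=c\}$; since $\ell\cdot\mathbf{0}=0$ and $\mathbf 0\notin F$ we have $c>0$, so we may rescale to $c=1$. The vertices of $F$ are then exactly the $\mathbf{x}_e$ with $\ell\cdot\mathbf{x}_e=1$, and as their affine span (being $F$'s) has dimension $|V|-2$ and avoids $\mathbf{0}$, their linear span is all of $\mathbf{1}^{\perp}$, dimension $|V|-1$; hence these tight edges span $V$ and form a connected subgraph — the admissibility condition — and in particular contain a spanning tree $T$. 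Because $\{\mathbf{x}_e:e\in T\}$ is a lattice basis of $\mathbf{1}^{\perp}\cap\Z^{V}$ and $\mathbf{1}^{\perp}\cap\Z^V$ is saturated in $\Z^V$, the constraints $\ell\cdot\mathbf{x}_e=1\in\Z$ ($e\in T$) force $\ell$, modulo $\mathbf{1}$, to be an integer vector; choosing that representative makes $\ell$ an admissible layering with $F=\{\mathbf{x}\in\tcQ_D:\ell\cdot\mathbf{x}=1\}$. Conversely, an admissible layering $\ell$ gives the face on which $\ell$ attains its maximum $1$; it misses $\mathbf 0$, so is proper, and its vertices span $\mathbf{1}^{\perp}$ by connectivity, so it is a facet. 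Two admissible layerings give the same facet iff they have the same tight-edge set iff they agree on $\mathbf{1}^{\perp}$ iff they are equivalent, which yields the bijection on equivalence classes.

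The one step that genuinely exploits the structure of $\tcQ_D$, rather than generalities, is the integrality of the layering in the second part: for an arbitrary lattice polytope a facet need not be cut out by a functional that is integral once normalized to value $1$ on the facet, and it is precisely the unimodularity of spanning-tree edge bases of $\mathbf{1}^{\perp}\cap\Z^{V}$ that makes this work. I expect everything else to be routine bookkeeping with the span–connectivity correspondence; the only other points needing a little attention are verifying that the faces produced in the two converse directions have codimension exactly one (not less, by the dimension count; not zero, since each omits a vertex) and carefully distinguishing the single-shore-connectivity of a general directed cut from the two-shore-connectivity of an elementary one.
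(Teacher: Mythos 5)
This proposition is not proved in the paper at all: it is quoted from \cite{fokszam}, so there is no internal argument to compare yours against. Judged on its own, your proof is correct and complete. The dimension dictionary $\dim\operatorname{span}\{\mathbf x_e : e\in S\}=|V|-c(S)$ is used correctly in both directions: for a facet through $\mathbf 0$ it forces the tight edge set to have exactly two components, whence the non-tight edges form a cut with weakly connected shores, and the sign of the supporting functional forces the cut to be directed and the functional to be $-\lambda f_{C^*}$ modulo $\mathbf 1$; for a facet avoiding $\mathbf 0$ it forces the tight edges to be connected and spanning. Your key step — that a functional taking value $1$ on the $\mathbf x_e$ of a spanning tree agrees, modulo $\R\mathbf 1$, with an integer vector — is exactly where the unimodularity of tree bases of $\mathbf 1^\perp\cap\Z^V$ (equivalently, saturation of that sublattice) enters, and you correctly flag it as the only non-generic ingredient; one could also get it more pedestrianly by setting $\ell(v_0)=0$ and propagating integer values along the tree, which is the device the paper itself uses later in the proof of Proposition \ref{prop:equality_in_D/e}. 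Two cosmetic points if this were written out in full: admissibility must be read as the tight edges forming a connected \emph{spanning} subgraph (your argument uses, and the stated bijection requires, $c(E_1)=1$ with isolated vertices counted), and the phrase ``the vertices of $F$ are exactly the tight $\mathbf x_e$'' should really be ``the affine hull of $F$ equals that of the tight generators,'' which is what the dimension count needs; loops (whose vector is $\mathbf 0$, never tight) and parallel edges (which give the same point) cause no harm, and when $\mathbf 0$ is an interior point both sides of part (1) are empty, which your argument covers vacuously.
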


\section{A formula for the $h^*$-polynomial}
\label{sec:h-vector_formula}

To prove Theorem \ref{thm:h_of_contraction}, we need a formula for the $h^*$-polynomial of the extended root polytope. For this we will rely on an extension of \cite[Theorem 1.8]{fix_elsorrend} from bidirected graphs to arbitrary directed graphs. 

To compute the $h^*$-polynomial of a lattice polytope it is very useful to dissect it into simplices, especially unimodular ones. As to where to find such simplices in the case of polytopes derived from graphs, the following fact is simple and well known in the connected case. A proof can be found, for example, in \cite[Lemma 3.5]{semibalanced}. The general case follows because free sums of unimodular simplices (containing the origin) are again unimodular simplices.

\begin{prop}
Let $D$ be a 
digraph and let $F$ be a spanning forest of $D$. Then $\tcQ_F$ is a unimodular simplex of maximal dimension within $\tcQ_D$.
\end{prop}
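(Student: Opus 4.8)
The plan is to show that the vertices $\{\mathbf 0\}\cup\{\mathbf x_e\mid e\in T\}$ are affinely independent, spanning a $(|V|-1)$-dimensional simplex, and that this simplex is unimodular. Since $T$ is a spanning tree, it has $|V|-1$ edges, so the candidate simplex has $|V|$ vertices, which matches the dimension $|V|-1$ of $\tcQ_D$ established in Proposition \ref{prop:root_poly_dim}; thus if the vertices are affinely independent the simplex is automatically of maximal dimension within $\tcQ_D$.

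First I would reduce affine independence of $\{\mathbf 0\}\cup\{\mathbf x_e\mid e\in T\}$ to linear independence of $\{\mathbf x_e\mid e\in T\}$, since one of the points is the origin. Then I would invoke the standard fact from algebraic graph theory that, after deleting one coordinate (fixing a root vertex $r\in V$), the vectors $\{\mathbf x_e\mid e\in T\}$ form a basis of $\R^{V\setminus\{r\}}$; equivalently, the reduced incidence matrix of a spanning tree is invertible. The cleanest way to see this is by induction on $|V|$: a spanning tree has a leaf $v\neq r$ incident to a unique tree edge $e$, the vector $\mathbf x_e$ is the only one among $\{\mathbf x_f\mid f\in T\}$ with a nonzero entry in coordinate $v$, so one peels off $v$ and $e$ and applies the inductive hypothesis to $T-v$.

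For unimodularity I would argue that the reduced incidence matrix $M$ whose columns are the $\mathbf x_e$ (with coordinate $r$ deleted) has $\det M=\pm1$. This follows from the same leaf-peeling induction: expanding the determinant along the row indexed by the leaf $v$ gives $\pm1$ times the determinant of the reduced incidence matrix of $T-v$, which is $\pm1$ by induction; the base case $|V|=1$ is the empty matrix with determinant $1$. Equivalently, one cites the well-known fact that incidence matrices of graphs are totally unimodular, so every maximal square submatrix coming from a spanning tree has determinant $0$ or $\pm1$, and the independence just proved rules out $0$. Having $\det M=\pm1$ means the lattice points of $\tcQ_T$ relative to the affine lattice it spans are exactly the integer combinations of the edge vectors of the simplex, i.e., $\tcQ_T$ is a unimodular (hence normalized-volume-one) simplex.

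There is no real obstacle here — every ingredient is classical — so the write-up should simply be a clean statement of the leaf-peeling induction (or a citation to total unimodularity of incidence matrices) together with the dimension count from Proposition \ref{prop:root_poly_dim}. The one point to state carefully is that "unimodular simplex of maximal dimension within $\tcQ_D$" means a simplex of dimension $\dim\tcQ_D$ all of whose vertices are lattice points and whose normalized volume is $1$; both claims follow from $\det M=\pm1$ and the fact that $T$ has exactly $\dim\tcQ_D$ edges.
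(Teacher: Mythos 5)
Your argument is correct. The paper itself does not prove this proposition at all --- it is stated as a ``simple and well known fact'' with a pointer to \cite[Lemma 3.5]{semibalanced} --- so your leaf-peeling induction (equivalently, total unimodularity of the incidence matrix) is a perfectly good self-contained substitute for the citation, and it is the standard argument one would expect behind that reference. Two small points of care in the write-up: first, the hypothesis that $D$ has a spanning tree already forces $D$ to be weakly connected, so Proposition \ref{prop:root_poly_dim} indeed gives $\dim\tcQ_D=|V|-1$ and your dimension count is legitimate. Second, unimodularity should be measured against the ambient lattice, namely $\Z^V$ intersected with the affine hull of $\tcQ_D$ (the hyperplane $\{\mathbf x\in\R^V\mid \sum_{v}x_v=0\}$), not against ``the affine lattice spanned by $\tcQ_T$'' --- the latter phrasing is vacuous, since any lattice simplex is unimodular with respect to the lattice its own edge vectors generate. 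Your determinant computation does settle the right statement, because deleting the root coordinate identifies $\Z^V\cap\{\sum_v x_v=0\}$ isomorphically with $\Z^{V\setminus\{r\}}$, and $\det M=\pm1$ then says the tree vectors generate that lattice; just state this identification explicitly rather than the looser phrase.
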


\begin{defn}
    We call a set of spanning forests $\mathcal{F}$ of $D$ a \emph{dissecting forest set} of $D$ if the simplices in $\{\tcQ_F\mid F\in \mathcal{F}\}$ form a dissection of $\tcQ_D$, that is, they are interior disjoint, and their union is $\tcQ_D$.
\end{defn}

We will give a formula that describes the $h^*$-polynomial of the extended root polytope as the generating function of a certain passivity statistic over a dissecting forest set. (We will generally use `passive' as the negation of the more common term `active.') To define these passivities, we need some additional definitions.

For a spanning forest $F$ of $D$ and an edge $e\in F$, let $T$ denote the connected component of $F$ that contains $e$. Then, the \emph{fundamental cut} of $e$ with respect to $F$, denoted by $C^*(F,e)$, is the set of edges of $D$ (including $e$) that connect the two components of $T-e$. 
We write $C^*_D(F,e)$ if we want to emphasize the underlying digraph $D$.
We say that an edge $e'\in C^*(F,e)$
\emph{stands parallel} to $e$ if the heads of $e$ and $e'$ are in the same component of $T-e$. 
Otherwise we say that $e'$ \emph{stands opposite} to $e$. 

For a spanning forest $F$ and an edge $e\notin F$, the \emph{fundamental cycle} of $e$ with respect to $F$, denoted by $C(F,e)$, is the unique cycle in the subgraph $F\cup e$.
We say that an edge $e'\in C(F,e)$
\emph{stands parallel} to $e$ if they point in the same cyclic direction within $C(F,e)$. Otherwise we say that $e'$ \emph{stands opposite} to $e$. (The same terms apply to any cycle containing both $e$ and $e'$.)

\begin{defn}[internal semi-activity in digraphs \cite{hyperBernardi}]
\label{def:semiactive}
	Let $D$ be a digraph with a fixed ordering of the edges. Let $F$ be a spanning forest of $D$. An 
	edge $e\in F$ is \emph{internally semi-active} for $F$ if in the fundamental cut $C^*(F,e)$, the minimal edge (with respect to the fixed ordering) stands parallel to $e$. If the minimal edge stands opposite to $e$, then we say that $e$ is \emph{internally semi-passive} for $F$. 
	
	The \emph{internal semi-activity} of a spanning forest (with respect to the fixed order) is the number of its internally semi-active edges, while the \emph{internal semi-passivity} is the number of internally semi-passive edges.
\end{defn}

This notion of activity is the dual pair of 
``external semi-activity'' \cite{LiPostnikov}.
Internal semi-activity is similar to Tutte's concept of internal activity \cite{Tutte}, but instead of requiring $e\in F$ to be the minimal element in $C^*(F,e)$, it only requires $e$ to stand parallel to the minimal edge of $C^*(F,e)$.

\begin{thm}\label{thm:interior_poly_fixed_order}
	Let $\mathcal{F}$ be any dissecting forest set for the digraph
	$D$. Fix an ordering of the 
 edges of $D$.
	Then the $h^*$-vector of the extended root polytope
 $\tcQ_D$ satisfies
	$$\left(h^*_{D}\right)_i = |\{F\in \mathcal{F}\mid F \text{ has exactly $i$ internally semi-passive edges}\}|.$$
\end{thm}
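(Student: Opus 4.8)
The plan is to leverage the fact that we already have a dissection of $\tcQ_D$ into unimodular simplices $\{\tcQ_T \mid T \in \mathcal{T}\}$, and then apply the standard half-open (or "shelling-type") decomposition technique for computing $h^*$-vectors of dissected polytopes. The key is to assign to each point of $\tcQ_D$ a unique simplex in the dissection containing it, and then to recognize the resulting half-open simplices' contributions to the Ehrhart series.

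First, I would recall the "naive" formula: since the $\tcQ_T$ are interior-disjoint and cover $\tcQ_D$, one can partition $\tcQ_D$ (as a set) into relatively-open pieces, one coming from each $T \in \mathcal{T}$, by using the fixed edge ordering to break ties. Concretely, a point lying in several $\tcQ_T$'s gets assigned to the one that is "lexicographically first" in an appropriate sense determined by which facets it lies on. This turns each $\tcQ_T$ into a half-open simplex $\tcQ_T^\circ$ — a unimodular simplex with some of its facets removed — and $\tcQ_D = \bigsqcup_{T} \tcQ_T^\circ$ as a disjoint union. The Ehrhart series of a half-open unimodular $d$-simplex with $j$ facets removed is $t^j/(1-t)^{d+1}$; summing over $T$ and multiplying by $(1-t)^{d+1}$ gives $h^*_D(t) = \sum_{T \in \mathcal{T}} t^{j(T)}$, where $j(T)$ is the number of removed facets of $\tcQ_T$. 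So the entire theorem reduces to showing that $j(T)$ equals the number of internally semi-passive edges of $T$.

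Second — and this is the heart of the matter — I would identify which facets of each simplex $\tcQ_T$ get removed by the tie-breaking rule, and match them to semi-passive tree edges. Each facet of $\tcQ_T$ corresponds to omitting one vertex of the simplex, i.e., either omitting $\mathbf{0}$ (one facet) or omitting some $\mathbf{x}_e$ with $e \in T$ (one facet per tree edge). The facet opposite $\mathbf{x}_e$ is the one separating $\tcQ_D$ "across" the fundamental cut $C^*(T,e)$; whether a point just across that facet is reassigned away from $T$ is governed by which edge of $C^*(T,e)$ one would "rotate into" the tree, and the fixed ordering selects the minimal such edge. The claim that $e$ gets its opposite facet removed precisely when the minimal edge of $C^*(T,e)$ stands opposite to $e$ is exactly the definition of internal semi-passivity. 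The facet opposite $\mathbf{0}$ should never be removed (every $\tcQ_T$ contains $\mathbf{0}$, and the full-dimensional region near $\mathbf{0}$ inside $\tcQ_T$ is genuinely shared in a way that the tie-break keeps), so it contributes nothing. Making the reassignment rule precise and verifying these facet-by-facet correspondences, using Proposition \ref{prop:facets} (the facet structure, in particular that facets not through $\mathbf{0}$ come from admissible layerings $\ell$ with $\ell \cdot \mathbf{x}_e \le 1$) together with the fundamental-cut description of the simplex facets, is the main technical obstacle.

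Finally, since this statement is asserted to be a straightforward extension of \cite[Theorem 1.8]{fix_elsorrend} from bidirected graphs to arbitrary digraphs, I expect the bulk of the argument to be a transcription of that proof, with the only genuine checking being that every step — the existence of the dissection (guaranteed here by hypothesis), the half-open decomposition compatible with the edge order, and the local facet-removal analysis — goes through verbatim when we drop the bidirectedness assumption. The anticipated difficulty is entirely in the combinatorial bookkeeping of the tie-breaking rule and its local geometry at each facet; the Ehrhart-theoretic input (half-open simplices, additivity of Ehrhart series under dissection, $(1-t)^{d+1}$ normalization) is routine. Once the bijection between removed facets and semi-passive edges is established, summing $t^{j(T)} = t^{\text{(semi-passivity of }T)}$ over $\mathcal{T}$ and extracting the coefficient of $t^i$ yields the stated formula.
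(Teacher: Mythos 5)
Your proposal is correct and is essentially the paper's own argument: the paper proves the theorem exactly as \cite[Theorem 1.8]{fix_elsorrend}, i.e., via the half-open decomposition of the dissection $\{\tcQ_T \mid T\in\mathcal{T}\}$ and counting removed facets, the only added ingredient being the concrete tie-breaking point $\mathbf{q}=\sum_{i=1}^m\bigl(\tfrac{2^{-i}}{1+\sum_{j=1}^m 2^{-j}}\bigr)\mathbf{x}_{e_i}$, whose super-decreasing weights make the facet of $\tcQ_T$ opposite $\mathbf{x}_e$ visible from $\mathbf{q}$ exactly when the minimal edge of $C^*(T,e)$ stands opposite to $e$ --- the precise form of the reassignment rule you flagged as the main technical obstacle. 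Your remaining claim that the facet opposite $\mathbf{0}$ is never removed is also right, though for a cleaner reason than the one you sketch: since every simplex of the dissection has $\mathbf{0}$ as a vertex, that facet must lie in a facet of $\tcQ_D$, while $\mathbf{q}$ lies in the relative interior of $\tcQ_D$.
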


This theorem can be proved the same way as Theorem 1.8 in \cite{fix_elsorrend}.
For the sake of completeness, we include the proof in the appendix.

\begin{ex}\label{ex}
The digraph of Figure \ref{fig:triang} has $h^*(x)=1+x$. To see this,
notice that the trees of the second and third panels form a dissecting forest set. (The extended root polytope can be seen in the fourth panel.) For the ordering $e_1<e_2<e_3$, the tree in the second panel has $0$ internally semi-passive edges, while the tree in the third panel has $1$ internally semi-passive edge, namely $e_3$.
The dissecting forest set is in fact unique in this case. In particular, the spanning forest $\{e_1,e_3\}$ cannot be extended to a dissecting forest set.
\end{ex}

\begin{figure}
	\begin{tikzpicture}
	\begin{scope}[shift={(-4.7,0)},scale=.80]
	\node [] (0) at (0,0.8) {\small $e_1$};
	\node [] (0) at (1,-0.3) {\small $e_2$};
	\node [] (0) at (2,0.8) {\small $e_3$};
	\node [circle,scale=.6,draw] (1) at (-0.5,0) {$u$};
	\node [circle,scale=.6,draw] (2) at (1,1) {$v$};
	\node [circle,scale=.6,draw] (4) at (2.5,0) {$w$};
	\path [thick,->,>=stealth] (1) edge [above] node {} (2);
	\path [thick,->,>=stealth] (1) edge [below] node {} (4);
	\path [thick,->,>=stealth] (2) edge [above] node {} (4);
	\end{scope}
 \begin{scope}[shift={(-1.4,0)},scale=.60]
	\node [] (0) at (1,-0.5) {1};
	\node [] (0) at (1.5,1.2) {};
	\node [] (0) at (2.9,0) {};
	\node [circle,fill,scale=.6,draw] (1) at (-0.5,0) {};
	\node [circle,fill,scale=.6,draw] (2) at (1,1) {};
	\node [circle,fill,scale=.6,draw] (4) at (2.5,0) {};
	\path [thick,->,>=stealth] (1) edge [left] node {} (2);
	\path [thick,->,>=stealth] (1) edge [above] node {} (4);
	\path [dashed,->,>=stealth] (2) edge [above] node {} (4);
	\end{scope}
 \begin{scope}[shift={(1.4,0)},scale=.60]
	\node [] (0) at (1,-0.5) {$x$};
	\node [] (0) at (1.5,1.2) {};
	\node [] (0) at (2.9,0) {};
	\node [circle,fill,scale=.6,draw] (1) at (-0.5,0) {};
	\node [circle,fill,scale=.6,draw] (2) at (1,1) {};
	\node [circle,fill,scale=.6,draw] (4) at (2.5,0) {};
	\path [dashed,->,>=stealth] (1) edge [left] node {} (2);
	\path [thick,->,>=stealth] (1) edge [above] node {} (4);
	\path [thick,->,>=stealth] (2) edge [above] node {} (4);
	\end{scope}
 \begin{scope}[shift={(4.5,0.5)},scale=.5]
        \draw[color=gray,fill=gray] (-0.5,0) -- (1,1) -- (2.5, 0) -- (1,-1) -- cycle;
	\node [] (0) at (1.5,-1.3) {$\mathbf{0}$};
	\node [] (0) at (-0.6,-0.5) {$\mathbf{x}_{e_1}$};
	\node [] (0) at (2.9,-0.6) {$\mathbf{x}_{e_3}$};
	\node [] (0) at (1.6,1.3) {$\mathbf{x}_{e_2}$};
	\node [circle,fill,scale=.4,draw] (1) at (-0.5,0) {};
	\node [circle,fill,scale=.4,draw] (2) at (1,1) {};
	\node [circle,fill,scale=.4,draw] (3) at (1,-1) {};
	\node [circle,fill,scale=.4,draw] (4) at (2.5,0) {};
        \path [thick,-] (1) edge [left] node {} (2);
	\path [thick,-] (1) edge [above] node {} (3);
	\path [thick,-] (2) edge [above] node {} (4);
	\path [thick,-] (3) edge [above] node {} (4);
	\end{scope}
        \end{tikzpicture}
\caption{Illustration for Example \ref{ex}.}\label{fig:triang}
\end{figure}

\section{Circuit signatures and dissections}
\label{sec:cycle_signa_and_triang}

In this section we discuss a general method of finding dissecting forest sets, to which later we can apply Theorem \ref{thm:interior_poly_fixed_order}. In fact, we will end up with sets of forests inducing regular triangulations, even though we do not need this stronger property and we will not explicitly prove it. The construction relies on the following notion \cite{BBY}.

\subsection{Acyclic circuit signatures}
Let $D$ be a digraph, and let $C$ be a cycle in $D$. A \emph{signed cycle} is an ordered partition $\overrightarrow{C}=C^+\sqcup C^-$ so that $C^+$ contains the edges of $C$ going in one of the cyclic directions, and $C^-$ contains the edges of $C$ going in the other cyclic direction. Naturally, each cycle supports two signed cycles that can be obtained from one another by switching the roles of $C^+$ and $C^-$. We call $C^+$ and $C^-$ the two \emph{arcs} of $\overrightarrow{C}$. The \emph{vector} of a signed cycle $\overrightarrow{C}$, denoted by $\chi_{\overrightarrow{C}}\in\mathbb{Z}^E$, has coordinate $+1$ corresponding to $e$ if $e\in C^+$, has $-1$ if $e\in C^-$, and $0$ if $e\notin C$.

A \emph{circuit signature} $\sigma$ is a collection of signed cycles such that for each cycle $C$, 
exactly one of the signed cycles supported on $C$ is contained in $\sigma$. By a slight abuse of notation, we denote by $\sigma(C)$ the signed cycle in $\sigma$ supported on $C$.
A circuit signature $\sigma$ is called \emph{acyclic} \cite{BBY} if for any non-empty set of cycles $C_1, \dots, C_s$ and 
positive coefficients $a_1, \dots, a_s$, we have $\sum_{i=1}^s a_i\cdot \chi_{\sigma(C_i)}\neq \mathbf{0}$. 

Let $w\colon E\to\mathbb{R}$ be a function, which in this context we will call a \emph{weight function}. 
We say that $w$ is \emph{generic} if we have $\sum_{f\in C^+} w(f) \neq \sum_{f\in C^-} w(f)$ for each signed cycle of $D$. 
This gives rise to a circuit signature 
in the following way.

\begin{defn}\label{def:signature_from_weight}
For a generic weight function $w\colon E\to \mathbb{R}$, let the \emph{induced circuit signature} $\cir^w$ be the one consisting of those signed cycles $\overrightarrow{C}=C^+\sqcup C^-$ that satisfy $\sum_{f\in C^+} w(f) > \sum_{f\in C^-} w(f)$.
\end{defn}

It is easy to see that $\cir^w$ is acyclic, since the vector $\sum_{i=1}^s a_i \chi_{\sigma(C_i)}$, with each $a_i > 0$, always has a positive scalar product with $w\in \mathbb{R}^E$. In fact, it is a consequence of Farkas' lemma that all acyclic circuit signatures arise this way.

\begin{prop}\cite[Lemma 2.3.1]{BBY}\label{prop:acycl_signature_char}
A signature $\sigma$ is acyclic if and only if $\sigma=\cir^w$ for some generic weight function $w$.
\end{prop}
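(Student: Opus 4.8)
The statement to prove is Proposition~\ref{prop:acycl_signature_char}, the characterization that a circuit signature is acyclic if and only if it equals $\cir^w$ for some generic weight function $w$.

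\medskip

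\noindent\textbf{Proof proposal.}

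The plan is to prove the two directions separately, with the forward direction ($\cir^w$ is acyclic) being essentially already observed in the text, and the converse (every acyclic signature arises this way) being the substantive part, handled via a separation/duality argument.

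For the easy direction, suppose $\sigma=\cir^w$ for a generic $w\colon E\to\R$. Given cycles $C_1,\dots,C_s$ and coefficients $a_1,\dots,a_s>0$, consider the scalar product $\bigl(\sum_{i=1}^s a_i\chi_{\sigma(C_i)}\bigr)\cdot w = \sum_{i=1}^s a_i\bigl(\chi_{\sigma(C_i)}\cdot w\bigr)$. By the definition of $\cir^w$, each term $\chi_{\sigma(C_i)}\cdot w = \sum_{f\in C_i^+}w(f)-\sum_{f\in C_i^-}w(f)$ is strictly positive, so the whole sum is strictly positive; hence $\sum a_i\chi_{\sigma(C_i)}\neq\mathbf0$, and $\sigma$ is acyclic. (One must also note $s\ge 1$, so the sum is nonempty.)

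For the hard direction, let $\sigma$ be an acyclic signature. The goal is to produce $w\in\R^E$ that is generic and satisfies $\chi_{\sigma(C)}\cdot w>0$ for every cycle $C$; genericity is automatic once all these strict inequalities hold, since for any cycle the two signed orientations give $\pm\chi_{\sigma(C)}\cdot w$, which is nonzero. Let $\Sigma=\{\chi_{\sigma(C)} : C \text{ a cycle of }D\}\subset\R^E$ be the (finite) set of signature vectors. Acyclicity says precisely that $\mathbf0$ is not in the convex hull of $\Sigma$ — more strongly, that no nonnegative combination of vectors in $\Sigma$ with not all coefficients zero equals $\mathbf0$, i.e.\ the convex cone generated by $\Sigma$ is pointed and does not contain $\mathbf 0$ in the convex hull of the generators. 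Actually the cleanest formulation: the statement ``$\sum_{i} a_i\chi_{\sigma(C_i)}\neq\mathbf0$ for all nonempty choices with $a_i\ge 0$ not all zero'' is equivalent to ``$\mathbf0\notin\conv(\Sigma)$'', because one can normalize the coefficients to sum to $1$. By the separating hyperplane theorem (a form of Farkas' lemma), since $\Sigma$ is a finite — hence compact — set with $\mathbf0\notin\conv(\Sigma)$, there exists $w\in\R^E$ with $w\cdot v>0$ for all $v\in\conv(\Sigma)$, in particular $w\cdot\chi_{\sigma(C)}>0$ for every cycle $C$. This $w$ is the desired weight function: the strict inequalities say $\sum_{f\in C^+}w(f)>\sum_{f\in C^-}w(f)$ exactly when $C^+\sqcup C^- = \sigma(C)$, so $\cir^w=\sigma$, and $w$ is generic as noted.

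The main obstacle — really the only point requiring care — is the reduction from the ``no nonnegative combination vanishes'' formulation of acyclicity to a clean statement about $\conv(\Sigma)$ to which a standard separation theorem applies. One must check that normalizing $\sum a_i=1$ loses nothing (if some nonnegative combination with positive total weight vanished, rescale; conversely a convex combination is a nonnegative combination), and that finiteness of $\Sigma$ lets us use the strict-separation version rather than worrying about closures. Given the paper already cites \cite[Lemma~2.3.1]{BBY} and explicitly flags ``it is a consequence of Farkas' lemma,'' I would keep this short: state the equivalence $\mathbf0\notin\conv(\Sigma)$, invoke strict separation of a point from a compact convex set, and conclude. No lengthy computation is needed.
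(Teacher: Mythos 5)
Your proof is correct and follows exactly the route the paper indicates: the easy direction is the positive-scalar-product observation already made in the text, and the converse is the Farkas/strict-separation argument (separating $\mathbf{0}$ from the compact set $\conv\{\chi_{\sigma(C)}\}$) that the paper attributes to \cite[Lemma 2.3.1]{BBY} rather than proving. Your reduction of acyclicity to $\mathbf{0}\notin\conv(\Sigma)$ by normalizing coefficients, and the observation that genericity of $w$ is automatic from the strict inequalities, are both sound, so this is a faithful filling-in of the cited proof.
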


An acyclic circuit signature $\sigma$ of $D$ also induces an acyclic circuit signature $\sigma/e$ for $D/e$ in a natural way \cite{consistency_matroids}, where $e$ is an arbitrary edge of the digraph $D$. Let us review this construction.

Recall what the cycles of $D/e$ look like: If $e\in C$ for a cycle $C$ of $D$, then $C-e$ is a cycle of $D/e$. If $e\notin C$ for a cycle $C$, then it might happen that $C$ is still a cycle in $D/e$ (if the cycle does not contain both endpoints of $e$), or it might be that  $C$ becomes two cycles $C_1$ and $C_2$ glued at a vertex (if both endpoints of $e$ are along $C$). However, in this latter case $C_1\cup e$ and $C_2\cup e$ are also cycles of $D$, and we get $C_1$ and $C_2$ from them by the first method. Hence we can say that the cycles of $D/e$ are either also cycles in $D$, or of the form $C-e$ where $C\ni e$ is a cycle of $D$.

Knowing this, it is quite natural to define the circuit signature $\sigma/e$: If $e\in C$ for a cycle $C$ of $D$, then let $(\sigma/e)(C-e)=\sigma(C)|_{E-e}$, by which we mean that $((\sigma/e)(C-e))^+=(\sigma(C))^+\cap (E-e)$ and $((\sigma/e)(C-e))^-=(\sigma(C))^-\cap (E-e)$. If $e\notin C$ where $C$ is a cycle of both $D$ and $D/e$, then let $(\sigma/e)(C)=\sigma(C)$.
By the above remark, this way we have assigned exactly one signed cycle to each underlying cycle.

The following statement was proved in \cite{consistency_matroids}, but we repeat the proof since it is very short.

\begin{prop}\cite[Lemma 5.8]{consistency_matroids}\label{prop:acyclic_sign_contraction}
    If $\sigma$ is an acyclic circuit signature of $D$, and $e$ is a non-loop edge, then $\sigma/e$ is an acyclic circuit signature of $D/e$.
\end{prop}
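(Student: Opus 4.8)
The plan is to verify the acyclicity condition for $\sigma/e$ directly from the definition, by pulling back any potential witness of non-acyclicity in $D/e$ to one in $D$. Recall that $\sigma/e$ is acyclic means: for any nonempty collection of cycles $\bar C_1,\dots,\bar C_s$ of $D/e$ and nonnegative coefficients $a_1,\dots,a_s$ (not all zero), we have $\sum_i a_i \chi_{(\sigma/e)(\bar C_i)} \neq \mathbf 0$ in $\mathbb Z^{E-e}$. So first I would suppose, for contradiction, that such a vanishing combination exists.

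Next, the key observation is that each cycle $\bar C_i$ of $D/e$ lifts canonically to a cycle $C_i$ of $D$ with $\sigma(C_i)|_{E-e} = (\sigma/e)(\bar C_i)$: if $\bar C_i$ is already a cycle of $D$ (and $e\notin C_i$), take $C_i = \bar C_i$; if $\bar C_i = C_i - e$ for a cycle $C_i$ of $D$ containing $e$, take that $C_i$. This is exactly the correspondence used to define $\sigma/e$, so by construction $\chi_{(\sigma/e)(\bar C_i)}$ is the restriction of $\chi_{\sigma(C_i)}$ to the coordinates $E-e$. Therefore $\sum_i a_i \chi_{\sigma(C_i)}$, viewed in $\mathbb Z^E$, restricts to $\sum_i a_i \chi_{(\sigma/e)(\bar C_i)} = \mathbf 0$ on $E-e$; that is, $\sum_i a_i \chi_{\sigma(C_i)}$ is supported on the single coordinate $e$, so it equals $\lambda \mathbf{e}_e$ for some $\lambda \in \mathbb Z$ (where $\mathbf e_e$ is the unit vector of coordinate $e$).

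Then I would rule out $\lambda \neq 0$. The point is that $\chi_{\sigma(C_i)}$ is the signed incidence vector of a cycle in $D$, hence lies in the cycle space of $D$, i.e., in the kernel of the incidence matrix of $D$ (the row space of edge–vertex incidences, appropriately oriented); equivalently, $\chi_{\sigma(C_i)}$ is orthogonal to every cut vector of $D$. A nonzero multiple of a single unit vector $\mathbf e_e$ is orthogonal to all cut vectors only if $\{e\}$ itself contains no cut, i.e., $e$ is a loop — but $e$ is assumed to be a non-loop edge. (Concretely: take any cut separating the endpoints of $e$; its cut vector has a nonzero entry at $e$, so its pairing with $\lambda \mathbf e_e$ is $\pm\lambda$, which must be $0$.) Hence $\lambda = 0$, and so $\sum_i a_i \chi_{\sigma(C_i)} = \mathbf 0$ in $\mathbb Z^E$. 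Since the $C_i$ are cycles of $D$ and the $a_i$ are nonnegative and not all zero, this contradicts the acyclicity of $\sigma$.

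The argument is essentially bookkeeping, so there is no serious obstacle; the one point deserving care is the lifting step — making sure the map $\bar C_i \mapsto C_i$ is well-defined and genuinely reproduces $(\sigma/e)(\bar C_i)$ as the restriction of $\sigma(C_i)$, which is immediate from the way $\sigma/e$ was defined but should be stated explicitly. A subtlety worth a remark: distinct cycles $\bar C_i$ of $D/e$ always lift to distinct cycles $C_i$ of $D$ (they already differ on $E-e$), so no collapsing of terms occurs; and the case analysis in the paragraph preceding the proposition guarantees every cycle of $D/e$ arises in exactly one of the two ways, so the lift is unambiguous.
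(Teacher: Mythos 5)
Your proposal is correct and follows essentially the same route as the paper's proof: lift the signed cycles of $D/e$ back to signed cycles of $D$ in $\sigma$, note that the resulting combination can be nonzero only in the coordinate of $e$, and then use the fact that it is a circulation (equivalently, orthogonal to cut vectors) together with $e$ not being a loop to force that coordinate to vanish, contradicting acyclicity of $\sigma$. Your extra remarks on well-definedness of the lift and distinctness of the lifted cycles are fine but only make explicit what the paper's definition of $\sigma/e$ already provides.
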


\begin{proof}
Take an arbitrary nonnegative linear combination $\sum_{i=1}^s a_i \chi_{(\sigma/e)(C_i)}$ of vectors of signed circuits, summing to zero. For each $i$
there is a unique cycle $C'_i$ of $D$
such that $\chi_{\sigma(C'_i)}|_{E - e} = \chi_{(\sigma/e)(C_i)}$.
Now consider $\sum_{i=1}^s a_i \chi_{\sigma(C'_i)}$, which must be everywhere zero except possibly for its $e$-coordinate.
But as this sum, interpreted as a system of non-negative values associated to the edges of the digraph, 
has the property that the in-flow at each vertex equals the out-flow,
our assumption that $e$ is not a loop implies that the value on $e$ is also $0$. This in turn implies, by the acyclicity of $\sigma$, that the $a_i$'s are zeros.
\end{proof}

\subsection{Dissections via acyclic circuit signatures}

As we have mentioned earlier, acyclic circuit signatures can be used to construct dissections of extended root polytopes. This is explained in \cite{LiPostnikov} 
in the `flat' case, when the generating vectors of the root polytope lie in an affine hyperplane (not containing $\mathbf0$). Let us give here a construction for the general case. We choose to give the proofs, although we note that one could also reduce the general case to the flat one.

Recall that we denote the set of spanning forests of a digraph $D$ by $\spt(D)$.

\begin{defn}
\label{def:dissecting_forests}
We say that the spanning forest $F$ is \emph{compatible} with the circuit signature $\sigma$ if $e\in \sigma(C(F,e))^+$ for each edge $e\in E-F$.

We denote the set of spanning forests of $D$ compatible with $\sigma$ by $\tree(D,\sigma)$.
\end{defn}

It follows from Li and Postnikov's results \cite{LiPostnikov} 
that if $D$ is a digraph where the two arcs of each cycle have equal cardinality, and $\sigma$ is an acyclic circuit signature, then $\tree(D,\sigma)$ is a dissecting forest set of $D$. (We often call these graphs \emph{semi-balanced} and they correspond to the flat case mentioned above.) For general graphs, one needs to be more careful, as the following example shows. 

\begin{ex}
    Consider the digraph $D$ of Figure \ref{fig:triang}, and take $\sigma=\cir^w$ with $w(e_1)=w(e_3)=1$ and $w(e_2)=3$. Then, for the unique cycle in the graph, the positive arc is $\{e_2\}$ and the negative arc is $\{e_1,e_3\}$. Hence $\tree(D,\sigma)$ consists of the unique tree with edges $e_1$ and $e_3$. However, as we can see in the fourth panel of Figure \ref{fig:triang}, $\tcQ_T$ for $T=\{e_1,e_3\}$ is only a proper subset of $\tcQ_D$.
\end{ex}

However, it is still possible to define dissecting forest sets based on acyclic circuit signatures, provided that one adds one more condition.
Let us call a circuit signature $\sigma$ \emph{long arc positive}, if for each cycle $C$ we have $|\sigma(C)^+| \geq |\sigma(C)^-|$. (Note that long arc positivity is automatically satisfied if each cycle has two equal arcs.)

\begin{prop}\label{prop:triang_tree_set_from_signature}
Let $D$ be a directed graph.
If $\sigma$ is an acyclic, long arc positive circuit signature, then $\tree(D,\sigma)$ is a dissecting forest set of $D$.
\end{prop}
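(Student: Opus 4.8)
The plan is to verify the two defining properties of a dissecting tree set separately: first that the simplices $\{\tcQ_T \mid T \in \tree(D,\sigma)\}$ cover $\tcQ_D$, and second that they have pairwise disjoint interiors. For the covering, I would argue that every point $\mathbf{p}$ in $\tcQ_D$ lies in some $\tcQ_T$ with $T \in \tree(D,\sigma)$. Pick any spanning tree $T_0$ with $\mathbf{p} \in \tcQ_{T_0}$ (such a tree exists because the simplices $\tcQ_T$ over all spanning trees cover $\tcQ_D$ — this is standard, e.g.\ it follows from the fact that $\tcQ_D$ is the union of the maximal unimodular simplices spanned by its vertex set, and any triangulation of $\tcQ_D$ using the vertices $\{\mathbf{0}\}\cup\{\mathbf{x}_e\}$ refines into such simplices). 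If $T_0 \notin \tree(D,\sigma)$, there is some $f \notin T_0$ with $f \in \sigma(C(T_0,f))^-$. I would then perform an exchange: replace $f$'s role by swapping in $f$ and removing an appropriate edge $g$ of the fundamental cycle $C(T_0,f)$ that stands opposite to $f$ (equivalently, an edge whose sign in the signed cycle is $-1$, relative to the orientation making $f$ positive), obtaining a new tree $T_1$. The key point is that $\mathbf{p}$ remains in $\tcQ_{T_1}$: the fundamental cycle relation $\mathbf{x}_f = \sum_{e \in C(T_0,f)\setminus f} \pm \mathbf{x}_e$ lets us rewrite the barycentric expression of $\mathbf{p}$, and long arc positivity is exactly what guarantees the rewriting can be done with nonnegative coefficients (the long arc carries the bulk of the ``mass'', so expressing a short-arc generator in terms of the others keeps coefficients nonnegative). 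Then I would show this exchange process terminates — using the acyclicity of $\sigma$ together with the generic weight function $w$ from Proposition \ref{prop:acycl_signature_char}, I would track the quantity $w(T) = \sum_{e\in T} w(e)$ (or $\sum_{e \notin T} w(e)$) and show each exchange strictly decreases it, so after finitely many steps we reach a tree in $\tree(D,\sigma)$.

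For interior-disjointness, suppose $T, T' \in \tree(D,\sigma)$ are distinct and their simplices share an interior point $\mathbf{p}$. Take an edge $f \in T \setminus T'$. Then $f \notin T'$, so the fundamental cycle $C(T',f)$ is defined, and since $T' \in \tree(D,\sigma)$ we have $f \in \sigma(C(T',f))^+$. On the other hand $f \in T$; consider the fundamental cut $C^*(T,f)$. I would aim to derive a contradiction by producing a linear functional that strictly separates the interiors of $\tcQ_T$ and $\tcQ_{T'}$, contradicting that both contain $\mathbf{p}$ in their interior. Concretely, the generic weight $w$ (with $\sigma = \cir^w$) should serve: one can build from $w$ a piecewise-linear ``height'' function on $\tcQ_D$ whose domains of linearity are exactly the $\tcQ_T$ for $T \in \tree(D,\sigma)$ — this is precisely the statement that the dissection is the regular subdivision induced by lifting $\mathbf{x}_e$ to height related to $w(e)$ (and $\mathbf{0}$ to height $0$), which the paper alludes to but does not prove. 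Rather than invoking regularity, a cleaner route: count. Since all $\tcQ_T$ are unimodular simplices of the same (full) dimension, $\sum_{T \in \tree(D,\sigma)} \mathrm{vol}(\tcQ_T) \ge \mathrm{vol}(\tcQ_D)$ by the covering property, with equality iff the interiors are disjoint. So it suffices to show $|\tree(D,\sigma)| = $ normalized volume of $\tcQ_D$. This can be obtained from a deletion–contraction recursion: one shows $|\tree(D,\sigma)| = |\tree(D-e,\sigma_{D-e})| + |\tree(D/e,\sigma/e)|$ for a suitable edge $e$ (matching the recursion for normalized volume of $\tcQ_D$, since $\tcQ_{D-e}$ and $\tcQ_{D/e}$ correspond to the ``lower'' and ``upper'' parts when $e$ is the lex-maximal edge with respect to $w$), using Proposition \ref{prop:acyclic_sign_contraction} for the contraction step, and checks the base case of a tree. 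Combined with the covering property, this forces disjointness.

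\textbf{Main obstacle.} I expect the covering step to be the crux, specifically the claim that each exchange keeps $\mathbf{p} \in \tcQ_{T_i}$ while strictly decreasing the potential $w(T_i)$. The subtlety is that when $\mathbf{p}$ lies on the boundary of $\tcQ_{T_0}$ — i.e.\ one of its barycentric coordinates is zero — the ``offending'' edge $f$ and the exchange edge $g$ must be chosen compatibly so that the rewriting genuinely uses a positive coefficient on the removed generator; handling these degenerate positions, and confirming that long arc positivity (rather than equal arcs) still suffices, is where the real work lies. A secondary technical point is verifying that the potential strictly decreases: this needs the exchange to replace a generator by the long-arc combination in the ``correct direction'' dictated by $\sigma = \cir^w$, and one must check the weight bookkeeping carefully, since $f$ itself is being removed from the ``outside'' set and $g$ added to it. Once these local-move lemmas are in place, termination follows from finiteness plus strict monotonicity, and disjointness follows from the volume count, so those steps should be routine by comparison.
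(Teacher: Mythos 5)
Your covering argument is essentially the one in the paper: reduce to a spanning-tree simplex containing $\mathbf p$, and if the tree is not in $\tree(D,\sigma)$, exchange along the signed fundamental cycle of an offending edge $f\in\sigma(C(T,f))^-$, with long arc positivity guaranteeing that the rewritten combination stays convex, and terminate via a weight function $w$ with $\sigma=\cir^w$. One slip there: the potential you propose, $\sum_{e\in T}w(e)$ (or its complement), need not decrease under the exchange $T\mapsto T-g+f$, since $\sigma=\cir^w$ compares \emph{arc sums}, not the individual weights $w(f)$ and $w(g)$. The paper instead decreases the point-dependent quantity $\sum_{e}\lambda_e w(e)$, where $(\lambda_e)$ are the current coefficients of $\mathbf p$; this drops by $\varepsilon\,\bigl(w(\sigma(C(T,f))^+)-w(\sigma(C(T,f))^-)\bigr)$ at each step. (The degenerate situation $\varepsilon=0$ that you flag is real, but it affects the paper's write-up equally and is easily handled, e.g.\ by covering a dense set of generic points.)

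The genuine gap is in your proof of interior-disjointness. The counting route rests on two recursions that are unproven and in fact false for general edges: the tree-count identity $|\tree(D,\sigma)|=|\tree(D-e,\cdot)|+|\tree(D/e,\sigma/e)|$ fails because a tree $T\not\ni e$ of $\tree(D,\sigma)$ must additionally satisfy $e\in\sigma(C(T,e))^+$, so deletion only yields an inequality; and the volume identity $\mathrm{Vol}(\tcQ_D)=\mathrm{Vol}(\tcQ_{D-e})+\mathrm{Vol}(\tcQ_{D/e})$ fails too --- for the digraph of Figure \ref{fig:triang} and $e=e_2$ one gets $2\neq 1+2$ (and the tree counts fail for the same edge). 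So ``for a suitable edge'' hides the entire difficulty; moreover long arc positivity is \emph{not} preserved by contraction, so the inductive hypothesis cannot be applied to $(D/e,\sigma/e)$ without extra work (this is precisely why the paper's proof of Theorem \ref{thm:h_of_contraction} introduces the modified weight $\bar w$). Disjointness in fact has a short direct proof using only acyclicity, which is what the paper does: if $\tcQ_{T_1}$ and $\tcQ_{T_2}$ shared an interior point, subtracting its two representations gives a signed circuit $\overrightarrow{C}$ with $C\subseteq T_1\cup T_2$, $(T_2-T_1)\cap C\subseteq C^+$ and $(T_1-T_2)\cap C\subseteq C^-$; then $\chi_{\overrightarrow C}=\sum_{f\in C-T_1}\chi_{\overrightarrow C(T_1,f)}$ and $\chi_{\overrightarrow C}=-\sum_{f\in C-T_2}\chi_{\overrightarrow C(T_2,f)}$, all these fundamental circuits lie in $\sigma$ because $T_1,T_2\in\tree(D,\sigma)$, and adding the two expressions exhibits $\mathbf 0$ as a nontrivial nonnegative combination of $\sigma$-circuits, contradicting acyclicity. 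Your alternative suggestion via regular subdivisions would also work but is likewise not carried out.
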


\begin{proof}
We first show that for $F_1, F_2 \in \tree(D,\sigma)$, the simplices $\tcQ_{F_1}$ and $\tcQ_{F_2}$ are (relative) interior disjoint. We note that this property holds even if $\sigma$ is only acyclic and not necessarily long arc positive.

Suppose for a contradiction that there is a point $\mathbf{p}\in \inter(\tcQ_{F_1})\cap \inter(\tcQ_{F_2})$. Then, $\mathbf{p}=\sum_{e\in F_1} \lambda_e \mathbf{x}_e =\sum_{e\in F_2} \mu_e \mathbf{x}_e$ with $\sum_{e\in F_1}\lambda_e < 1$ and $\sum_{e\in F_2}\mu_e < 1$ (the sums are smaller than $1$ because $\mathbf{0}$ is a vertex in both simplices, and it has to have a positive coefficient), moreover $\lambda_e > 0$ for each $e\in F_1$ and $\mu_e > 0$ for each $e\in F_2$. Define $\lambda_e=0$ for $e\notin F_1$ and $\mu_e = 0$ for $e\notin F_2$. As $F_1\neq F_2$, there exists some $e\in E$ with $\lambda_e \neq \mu_e$, whence we have a nontrivial linear relation $\mathbf{0}=\sum_{e\in E}(\lambda_e - \mu_e)\mathbf{x}_e$.

In this case there exists a signed circuit $\overrightarrow{C}$ such that $C^+ \subseteq \{e\in E\mid \lambda_e-\mu_e > 0\}$ and $C^- \subseteq \{e\in E\mid \lambda_e-\mu_e < 0\}$. (This is proved for example in \cite[Claim 2.9]{eulerian_greedoid}.) In particular, $C\subseteq F_1\cup F_2$. Also, for $e\in F_1-F_2$ we have $\lambda_e-\mu_e=\lambda_e > 0$, in other words $(F_1-F_2)\cap C \subseteq C^+$, and for $e\in F_2-F_1$ we have $\lambda_e-\mu_e=-\mu_e < 0$, whence $(F_2-F_1)\cap C \subseteq C^-$.

On the other hand, we can write $\chi_{\overrightarrow{C}}$ as a sum of vectors of signed fundamental cycles of $F_1$, as well as of $F_2$. Indeed $-\chi_{\overrightarrow{C}}=\sum_{e\in C-F_1} \chi_{\overrightarrow{C}(F_1,e)}$, which is true since each $e\in C-F_1 \subset (F_2-F_1)\cap C$ is in $C^-$. Similarly $\chi_{\overrightarrow{C}}=\sum_{e\in C-F_2} \chi_{\overrightarrow{C}(F_2,e)}$, which is true since each $e\in C-F_2 \subset (F_1-F_2)\cap C$ is in $C^+$. As $F_1, F_2\in \tree(D,\sigma)$, the signed fundamental cycles $\overrightarrow{C}(F_i, e)$ for $i=1,2$ and $e\notin F_i$, are in $\sigma$. Thus $\mathbf0=\sum_{e\in C-F_1} \chi_{\overrightarrow{C}(F_1,e)} + \sum_{e\in C-F_2} \chi_{\overrightarrow{C}(F_2,e)}$ is a positive linear combination of signed circuits in $\sigma$, contradicting the assumption that $\sigma$ is acyclic.

Now it is enough to show that $\tcQ_D\subseteq \bigcup\{\tcQ_F \mid F\in \tree(D,\sigma)\}$.

Let $\mathbf{p}$ be an arbitrary point in $\tcQ_D$. By Caratheodory's theorem, we can choose $|V|-c(D)+1$ affine independent generators such that $\mathbf{p}$ is in their convex hull. If $\mathbf0$ is one of these vectors, that means that we have $\mathbf{p}\in \tcQ_F$ for some spanning forest $F$ (not necessarily in $\tree(D,\sigma)$). 

Else if $\mathbf0$ is not one of the vectors, then we have $\mathbf{p}\in \mathcal{Q}_S$ for some $S$ where $S\subset D$ contains a cycle $C$. Let us express this as $\mathbf{p}=\sum_{e\in S}\lambda_e \mathbf{x}_e$, where each $\lambda_e\geq0$ and $\sum_{e\in S}\lambda_e=1$. 
This implies 
\[\mathbf{p}=\sum_{e\in S-C}\lambda_e \mathbf{x}_e+\sum_{e\in \sigma(C)^+}(\lambda_e - \varepsilon) \mathbf{x}_e + \sum_{e\in \sigma(C)^-}(\lambda_e + \varepsilon) \mathbf{x}_e,\] 
where $\varepsilon=\min\{\lambda_e \mid e\in \sigma(C)^+\}$. The right hand side is a nonnegative linear combination, in which the coefficient of one edge of $C$ became $0$, and where the sum of the coefficients is $1-(|\sigma(C)^+|-|\sigma(C)^-|)\varepsilon \leq 1$ by the long arc positivity assumption. 
Therefore we can add $\mathbf0$ with a nonnegative coefficient to obtain a convex combination. We have thus found, just like in the previous case, a spanning forest $F\subset S$ such that $\mathbf{p}\in \tcQ_F$.

Now we have to deal with the possibility that $F\notin \tree(D,\sigma)$. That means that for some $e_0\notin F$ we have $e_0\in \sigma(C(F,e_0))^-$. Let again $\mathbf{p}=\sum_{e\in F}\lambda_e \mathbf{x}_e + \lambda_0 \cdot \mathbf{0}$.
We employ the same trick as before, that is, let $\lambda_{e_0} = 0$, and re-write our convex combination for $\mathbf p$ as
\begin{multline*}
\mathbf{p}= \sum_{e\in F-C(F,e_0)} \lambda_e \mathbf{x}_e + \sum_{e\in \sigma(C(F,e_0))^+}(\lambda_e-\varepsilon)\mathbf{x}_e + \sum_{e\in \sigma(C(F,e_0))^-}(\lambda_e+\varepsilon)\mathbf{x}_e\\ +\left(\lambda_0 + (|\sigma(C(F,e_0))^+|-|\sigma(C(F,e_0))^-|)\varepsilon\right)\cdot\mathbf{0},
\end{multline*}
where $\varepsilon =\min\{\lambda_e \mid e\in \sigma(C(F,e_0))^+ \}$. Since $\sigma$ is long arc positive, we have $|\sigma(C(F,e_0))^+|-|\sigma(C(F,e_0))^-|\geq 0$; furthermore, as $e_0\in \sigma(C(F,e_0))^-$ by assumption, the coefficient  $\lambda_{e_0} = 0$ has increased to $\varepsilon$ (or stayed the same, if $\varepsilon=0$).
Therefore the new expression is again a convex combination.
As the coefficient of some $g\in \sigma(C(F,e_0))^+$ is $0$ in the new convex combination, we may take $F'=F-g+e_0$ as another spanning forest of $D$ so that the associated simplex contains $\mathbf p$. Now let us show that in a well-defined sense, we have improved our situation.

Since $\sigma$ is acyclic, by Proposition \ref{prop:acycl_signature_char}, there exists a weight function $w\colon E\to\mathbb R$ such that $\sigma= \cir^w$. Let us consider the number $\mathrm{value}(F,\mathbf p)=\sum_{e\in E} \lambda_e w(e)$ associated to any spanning forest $F$ of $D$, where the $\lambda_e$ are the barycentric coordinates of $\mathbf p$ with respect to $\tcQ_F$ (except for the one corresponding to $\mathbf0$), extended as $0$ to the non-edges of $F$. Then for the two forests $F$ and $F'$ above, we have
\begin{align*}
\mathrm{value}(F',\mathbf p) & =  \mathrm{value}(F,\mathbf p)-\varepsilon \cdot \left(\sum_{e\in\sigma(C(F,e_0))^+}w(e)- \sum_{e\in\sigma(C(F,e_0))^-}w(e)\right) \\ 
& < \mathrm{value}(F,\mathbf p),
\end{align*}
where we used the
the definition of $\cir^w=\sigma$.
This means that if there is any $e_0\notin F$ such that $e_0\in \sigma(C(F,e_0))^-$, then we can find another forest $F'$ such that $\mathbf{p}\in\tcQ_{F'}$ and $\mathrm{value}(F',\mathbf p)<\mathrm{value}(F,\mathbf p)$. As there are finitely many spanning forests, we cannot continue this indefinitely, which means that there is a forest $F$ such that $F\in \tree(D,\sigma)$ and $\mathbf{p}\in\tcQ_F$.
\end{proof}

We note again that one could also deduce the 
previous result from the flat case (discussed in \cite{LiPostnikov}) by embedding each generator of the extended root polytope to one higher dimension, with last coordinate equal to $1$, and setting the weight of $\mathbf0$ to be a negative number with absolute value an order of magnitude larger than the weights of the other vectors.

\section{Proof of Theorem \ref{thm:h_of_contraction} and a product formula}
\label{sec:proof}

Now we are ready to prove our main theorem. 

\begin{proof}[Proof of Theorem \ref{thm:h_of_contraction}]
By Claim \ref{cl:loop_and_parallel_edge} we may suppose that $D$ does not have any loops and parallel edges.    
We will use Theorem \ref{thm:interior_poly_fixed_order} and the dissecting forest sets discussed in Section \ref{sec:cycle_signa_and_triang} to compare the $h^*$-polynomials of $\tcQ_D$ and $\tcQ_{D/e}$.

First, let us derive a formula for $h^*_D$.
Theorem \ref{thm:interior_poly_fixed_order} requires that we specify an ordering of the edges of $D$, as well as that we fix a dissecting forest set for $D$.

Let us choose an ordering of the edges of $D$ such that $e$, the edge to be contracted, is the minimal element. Let $\pi\colon E\to \{1, \dots , |E|\}$ express the position of each edge in the ordering, that is, if $e < e_2 < \dots 
< e_{|E|}$ then $\pi(e)=1$ and for $i\geq 2$ we have $\pi(e_i)=i$.

As to the dissection (which, in fact, will be a triangulation) of $\tcQ_D$, we will also rely on the ordering $\pi$ in our construction. More precisely, we define the weight function 
\begin{equation}
\label{eq:sulyfuggveny}
w(f)=1- 2^{-\pi(f)-1}
\end{equation}
for $f\in E$.
Notice that $w$ is generic, moreover that for any subset $S\subseteq E$, we have $|S|-\frac{1}{2} < \sum_{f\in S} w(f) \leq |S|$.
Take $\sigma=\cir^w$ as in Definition \ref{def:signature_from_weight}, and let $\mathcal{F}=\tree(D,\sigma)$, cf. Definition \ref{def:dissecting_forests}.

\begin{claim}
    $\sigma$ is an acyclic, long arc positive circuit signature.
\end{claim}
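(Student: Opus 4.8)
The plan is to dispose of the two required properties of $\sigma=\cir^w$ separately, after recording the single elementary fact about $w$ that drives both. Writing $w(f)=1-2^{-\pi(f)-1}$, for every $F\subseteq E$ we have $\sum_{f\in F}w(f)=|F|-\sum_{f\in F}2^{-\pi(f)-1}$, and here $0\le\sum_{f\in F}2^{-\pi(f)-1}<\tfrac12$, since the exponents $-\pi(f)-1$ are distinct integers that are all at most $-2$, so the sum is strictly dominated by $\sum_{i\ge2}2^{-i}=\tfrac12$. (This same dyadic-uniqueness remark re-proves the genericity of $w$ already noted in the text, hence that $\cir^w$ is well defined; I would only spell that out if a referee asked.)

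For acyclicity there is nothing new to do: I would just invoke the observation immediately after Definition \ref{def:signature_from_weight}, namely that $\cir^w$ is acyclic for any generic $w$ because a nonnegative, not-identically-zero combination $\sum_i a_i\chi_{\sigma(C_i)}$ has strictly positive inner product with $w$ and so cannot be $\mathbf0$.

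For long arc positivity, fix a cycle $C$ and let $\sigma(C)=\sigma(C)^+\sqcup\sigma(C)^-$ be the signed cycle selected by $\cir^w$; by definition this means $\sum_{f\in\sigma(C)^+}w(f)>\sum_{f\in\sigma(C)^-}w(f)$. Substituting the formula for $w$ and rearranging gives
\[
|\sigma(C)^+|-|\sigma(C)^-|\;>\;\sum_{f\in\sigma(C)^+}2^{-\pi(f)-1}-\sum_{f\in\sigma(C)^-}2^{-\pi(f)-1}\;>\;-\tfrac12,
\]
where the last step uses that the first partial sum is $\ge0$ and the second is $<\tfrac12$. Since the left-hand side is an integer, it is $\ge0$, i.e.\ $|\sigma(C)^+|\ge|\sigma(C)^-|$; as $C$ was arbitrary, $\sigma$ is long arc positive.

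I do not expect a genuine obstacle here. The only place that needs a little care is the \emph{strict} inequality $\sum_{f\in F}2^{-\pi(f)-1}<\tfrac12$, which rests on $\pi$ being injective into $\{1,\dots,|E|\}$ (so all exponents are $\le-2$ and distinct); it is precisely this strictness that upgrades ``$>-\tfrac12$'' to ``$\ge0$'' for the integer $|\sigma(C)^+|-|\sigma(C)^-|$. Everything else is routine bookkeeping, and the acyclicity half is quoted directly from the surrounding text.
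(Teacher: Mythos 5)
Your proof is correct and follows essentially the same route as the paper: acyclicity is quoted from the surrounding text (the paper cites Proposition \ref{prop:acycl_signature_char}, which contains the same observation), and long arc positivity comes from the same dyadic bound $\sum_{f\in F}2^{-\pi(f)-1}<\tfrac12$, with your direct rearrangement being just the contrapositive of the paper's argument that a signed cycle with $|C^-|>|C^+|$ cannot lie in $\cir^w$.
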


\begin{proof}
    Acyclicity follows from Proposition \ref{prop:acycl_signature_char}. For long arc positivity, notice that if we have a signed cycle $\overrightarrow{C}$ with $|C^-| > |C^+|$, then 
    \begin{multline*}
    \sum_{f\in C^-}w(f)=|C^-|-\left(\sum_{f\in C^-} 2^{-\pi(f)-1}\right)>|C^-|-\frac12\\ 
    >|C^+|\geq |C^+|- \left(\sum_{f\in C^+} 2^{-\pi(f)-1}\right)=\sum_{f\in C^+}w(f),\end{multline*}
    whence $\overrightarrow{C}$ is not in $\cir^w$.
\end{proof}

Thus, by Proposition \ref{prop:triang_tree_set_from_signature}, the collection $\mathcal F$ is a dissecting forest set for $D$.
Then by Theorem \ref{thm:interior_poly_fixed_order}, we have
\begin{equation}
    \label{eq:h*D}
h^*_{D}(x)=\sum_{F\in \mathcal F} x^{p_{\pi}(F)},
\end{equation}
where $p_{\pi}(F)$ is the internal semipassivity of $F$ with respect to $\pi$, cf.\ Definition \ref{def:semiactive}.

Now let us turn to $D/e$. 
As our edge ordering, we will use the restriction $\pi'$ of $\pi$ to $E(D/e)=E-e$. 
Our dissecting forest set will be closely related to $\mathcal F$. Namely, we let 
\[\mathcal{F}'=\{F' \in \spt(D/e) \mid F'\cup e\in \mathcal{F}\}.\]
It is clear that the elements of $\mathcal F'$ are spanning forests of $D/e$, but we still have to ascertain that they do form a dissecting forest set for $D/e$. For this, we will show that on the one hand, $\mathcal F'$ arises as $\mathcal F'=\tree(D/e, \sigma/e)$, and on the other hand, that $\sigma/e$ is not only acyclic (by Proposition \ref{prop:acyclic_sign_contraction}), but also a long arc positive circuit signature of $D/e$.

The fact that $\mathcal F'=\tree(D/e, \sigma/e)$ is quite easy to see. Indeed, the set of non-edges of any forest $F'\in\mathcal F'$ and the corresponding forest $F=F'\cup e\in\mathcal F$ are the same. For each such edge $f$, its two respective fundamental cycles are either the same or they differ by only the edge $e$. By the definition of the circuit signature $\sigma/e$, the edge $f$ belongs to the positive arc of one cycle if and only if the same is true for the other cycle.

As to the long arc positivity of $\sigma/e$, the only way it could fail is if some cycle $C$ in $D$ had two arcs of equal size and $e$ was part of $\sigma(C)^+$. But because $\sigma=\cir^w$ and $\pi(e)=1$ (cf.\ \eqref{eq:sulyfuggveny}),
this is impossible.
Therefore $\mathcal F'$ is indeed a dissecting forest set.

Now let us apply Theorem \ref{thm:interior_poly_fixed_order} to $D/e$, the forest set $\mathcal F'$, and the ordering $\pi'$ induced by $\pi$ on $E-e$.
It tells us that 
$$h^*_{D/e}(x)=\sum_{F'\in \mathcal F'} x^{p_{\pi'}(F')},$$ 
where $p_{\pi'}(F')$ is the internal semipassivity of $F'$ with respect to $\pi'$. As for each $F'\in \mathcal F'$ we have
$F=F'\cup e\in \mathcal F$, and the correspondence $F'\mapsto F$ is one-to-one, it is enough to show that for such pairs $p_\pi(F)=p_{\pi'}(F')$ holds. 

We will prove this by showing that each edge $f\in F'$ is such that $f$ is internally semi-passive with respect to $F'$ if and only if it is internally semi-passive in $F$. Moreover, we claim that $e$ is not internally semi-passive in $F$. 

Because we obtained $F'$ by contracting $e\in F$, for each $f\in F'$, we have $C_{D/e}^*(F',f)=C_D^*(F,f)$. Since $\pi'$ is the restriction of $\pi$ to $E-e$, indeed the semi-activity of $f$ does not change.
As $e$ is the minimal element in $\pi$, it is also the minimal element in $C_D^*(F,e)$, wherefore it is internally semi-active.
This completes the proof of Theorem \ref{thm:h_of_contraction}.
\end{proof}

The formula \eqref{eq:h*D}, which was key to the proof, also enables us to establish the multiplicativity of $h^*_D$.

\begin{prop}\label{prop:szorzat}
    For any two directed graphs $D_1$ and $D_2$, and their disjoint union $D_1\sqcup D_2$, we have $h^*_{D_1\sqcup D_2}=h^*_{D_1}h^*_{D_2}$.
\end{prop}

\begin{proof}
    Let us fix arbitrary orderings $\pi_1$ and $\pi_2$ on the respective edge sets of the two graphs and concatenate them to the ordering $\pi_0$ of $E(D_1\sqcup D_2)=E(D_1)\sqcup E(D_2)$, say in such a way that the edges of $D_1$ are all smaller than the edges of $D_2$. Just like in the previous proof, take the weight functions $w_i$ for $i=0,1,2$ with $w_i(e)=- 2^{-\pi_i(e)-1}$. Induce the dissecting forest sets $\mathcal F_1=\tree(D_1, \cir^{w_1})$, $\mathcal F_2=\tree(D_2, \cir^{w_2})$, and $\mathcal F_0=\tree(D_1\sqcup D_2, \cir^{w_0})$, respectively, for the graphs $D_1$, $D_2$, and $D_1\sqcup D_2$. Let us spell out how forests $F\in \mathcal F_i$ are characterized by their fundamental cycles ($i\in\{0,1,2\}$):
\begin{multline}
\label{eq:fundamental_circuits_of_T}
\text{For $f\notin F$, either the arc of $C(F,f)$ containing $f$ has more edges than the}\\ \text{opposite arc, or the two arcs contain the same number of edges and}\\ \text{the minimal edge of the cycle, according to $\pi_i$, is along the arc opposite to $f$.}
\end{multline}
This is indeed necessary and sufficient for a forest to be compatible with $\sigma=\cir^{w_i}$ because the terms $- 2^{-\pi_i(g)-1}$ in the weights $w_i(g)$ are negligible compared to the term $1$, whence if the numbers of edges on the two arcs are different, then the arc with more edges will have the higher weight. On the other hand if the two arcs have an equal number of edges, then the terms $-2^{-\pi_i(g)-1}$ decide which arc has larger weight. This is the smallest for the edge with the smallest $\pi_i(g)$ value, and since the rest of the edges cannot overcome this, the arc containing the edge with the smallest $\pi_i$-value has the smaller weight.

From this it is obvious that $\mathcal F=\{ F_1\sqcup F_2\mid 
 F_1\in\mathcal{F}_1, F_2\in\mathcal{F}_2\}$.
Furthermore, for any $F_1\in\mathcal F_1$ and $F_2\in\mathcal F_2$, the various internal semipassivities satisfy
\[p_{\pi_0}(F_1\sqcup F_2)=p_{\pi_1}(F_1)+p_{\pi_2}(F_2),\]
because the fundamental cut, with respect to $F_1\sqcup F_2$, of any edge is contained either in $D_1$ or in $D_2$. From this and \eqref{eq:h*D}, the product formula follows immediately.
\end{proof}

\section{The case of equality}
\label{sec:equality}

We turn to examining the cases when a graph minor of $D$ inherits $h^*_D$. The relevant statements were given in the introduction.
After proving them, we discuss several situations in which equality does or does not hold.

\begin{proof}[Proof of Proposition \ref{prop:equality_in_D-e}]
    By Claim \ref{cl:loop_and_parallel_edge}, if $e$ is a loop or it has a parallel copy, then $\tcQ_{D-e}=\tcQ_D$, from which $h^*_D=h^*_{D-e}$ is obvious.

    If $e$ is a bridge, then $\tcQ_D$ is a coning over $\tcQ_{D-e}$ with apex $\mathbf{x}_e$. As a unimodular simplex $\tcQ_F$ for a spanning forest $F$ of $D-e$ stays unimodular when the vertex $\mathbf{x}_e$ is added to it, we again conclude that $h^*_{D-e}=h^*_{D}$.
    
    Conversely, by Proposition \ref{prop:root_poly_dim}, we have $\dim(\tcQ_{D-e})=\dim(\tcQ_D)$ if and only if $e$ is not a bridge. 
    Hence if $e$ is neither a loop, nor has a parallel copy, nor is it a bridge, then $\dim(\tcQ_{D-e})=\dim(\tcQ_D)$, but also $\tcQ_{D-e}\subsetneq \tcQ_D$ because
    $\mathbf{x}_e$ is not a generator of $\tcQ_{D-e}$.  
    Thus, the volume of $\tcQ_{D-e}$ is strictly smaller than that of $\tcQ_D$, which implies $h^*_{{D-e}}\neq h^*_{D}$.
\end{proof}

\begin{proof}[Proof of Proposition \ref{prop:equality_in_D/e}]
    By Claim \ref{cl:loop_and_parallel_edge} we may suppose that $D$ does not have any loops and parallel edges.

We use the notation of the proof of Theorem \ref{thm:h_of_contraction}. It is clear that the necessary and sufficient condition of equality is that the map $F'\mapsto F'\cup e$ be not only an injection but also a surjection from $\mathcal F'$ to $\mathcal F$. In other words, the condition is that all elements of $\mathcal F$ contain $e$. We have to show that this is equivalent to the assertion on $e$ that is stated in the Proposition.
	
	We start with proving that if $\mathbf x_e$ is not included in some facet $L$ of $\tcQ_D$, where $\mathbf0\notin L$, then there needs to be a forest in $\mathcal F$ that does not contain $e$. Indeed, take a generic point $\mathbf{p}$ in the relative interior of $L$. Then there is a unique forest $F\in\mathcal F$ such that $\mathbf{p}\in\tcQ_F$. 
 The point $\mathbf p$ is interior to a facet of $\tcQ_F$, which requires all but one vertex of $\tcQ_F$ to lie along $L$. Since $\mathbf0\notin L$, this means that for each edge $f\in F$, the vector $\mathbf{x}_f$ is on the facet $L$, and because $\mathbf x_e\notin L$, this implies $e\notin F$. 

	Next, we show that if $\mathbf{x}_e$ is contained by each facet that does not contain $\mathbf{0}$, then $e\in F$ for each $F\in\mathcal F$.
	By Proposition \ref{prop:facets}, the vector $\mathbf{x}_e$ is in each facet not containing $\mathbf{0}$ if and only if $\ell\cdot \mathbf{x}_e = 1$ for each admissible layering $\ell$.
	
	Suppose for a contradiction that there exists $F\in \mathcal F$ such that $e\notin F$. Fix the value $\ell=0$ for an arbitrary collection of vertices, one from each connected component of $D$. 
 This can be extended in a unique way to a vector $\ell\in\R^V$ with $\ell\cdot \mathbf{x}_f = 1$ for each $f\in F$. We claim that $\ell$ is an admissible layering.
 Tight edges form a spanning subgraph because they include $F$. We also need to show $\ell\cdot \mathbf{x}_g\leq 1$ for each $g\in E(D)-F$. To see this, note that $g\in \sigma(C(F,g))^+$ by the definition of $\mathcal F$, which indeed implies that 
 \begin{multline*}
 \ell\cdot\mathbf{x}_g = \ell\cdot\left(\sum_{f\in \sigma(C(F,g))^-}\mathbf{x}_f - \sum_{f\in F\cap\sigma(C(F,g))^+} \mathbf{x}_f\right)\\
 =|\sigma(C(F,g))^-|-(|\sigma(C(F,g))^+|-1)\leq1
 \end{multline*}
by the long arc positivity of $\sigma$.
    This proves that $\ell$ is admissible, which implies $\ell\cdot \mathbf{x}_e=1$ by our assumption on $e$. 
    As we also have $e\in \sigma(C(F,e))^+$ by the definition of $\mathcal F$. By the above computation, we have to have
    $$|\sigma(C(F,e))^+|=|\sigma(C(F,e))^-|.$$ But then, since $\pi(e)=1$, the definitions of $w$ and $\sigma=\cir^w$ imply that $e\in \sigma(C(F,e))^-$, a contradiction.
\end{proof}

The equivalent condition of Proposition \ref{prop:equality_in_D/e}, for $h^*_{D/e}=h^*_D$, is given in the language of polytopes. While it can be rephrased in graph-theoretical terms using admissible layerings (see Proposition \ref{prop:facets}), that would not be a particularly appealing condition. 
There is, however, a simpler sufficient condition that can easily be formulated in terms of graphs only.

\begin{prop}\label{prop:elegseges}
Let $D$ be a directed graph and let $e$ be an edge of $D$ so that for all cycles through $e$, the arc containing $e$ is at most as long as the opposite arc. Then we have $h^*_{D/e}=h^*_D$. In particular, if $e$ is a bridge then $h^*_{D/e}=h^*_D$.
\end{prop}

\begin{proof}
    The condition on $e$ implies that it is not a loop edge. By Proposition \ref{prop:equality_in_D/e}, it suffices to check that $\mathbf{x}_e$ is contained by each facet $L$ of $\tcQ_D$ so that $\mathbf{0}\notin L$. Proposition \ref{prop:facets} tells us that fixing such a facet is equivalent to choosing an admissible layering $\ell\colon V(D)\to\Z$. That is, for each such $\ell$, we have to show that $\ell\cdot\mathbf x_e=1$.

    Suppose that for some $\ell$ this is not so. Then by the admissibility of $\ell$, on the one hand, we have $\ell\cdot\mathbf x_e\leq0$; on the other hand there is a path $P$ in $D$, between the two endpoints of $e$, so that for each edge $f$ along $P$, we have $\ell\cdot\mathbf x_f=1$. Now, for the two arcs $A\ni e$ and $B\not\ni e$ of the cycle $P\cup e$, we obtain
    \[
    |A|-|B|\geq \left(\sum_{f\in A}\ell\cdot\mathbf x_f-\ell\cdot\mathbf x_e+1\right)-\sum_{f\in B}\ell\cdot\mathbf x_f
    =0-\ell\cdot\mathbf x_e+1\geq1,
    \]
    where we use the fact that $\sum_{f\in P\cup e} \ell\cdot \mathbf{x}_e=\ell\cdot \sum_{f\in P\cup e} \mathbf{x}_e=\ell\cdot \mathbf{0}=0$ because $P\cup e$ is a cycle.
    We have obtained that $|A|>|B|$, which contradicts our assumption that $e$ not is contained in the arc of larger cardinality for any cycle.

    As a bridge is not part of any cycle, it vacuously satisfies our condition. 
\end{proof}

\begin{coroll}\label{cor:semibalanced}
    For any edge $e$ of a semi-balanced graph $D$, we have $h^*_{D/e}=h^*_D$.
\end{coroll}

\begin{proof}
    By definition, the two arcs of each cycle of $D$ have equal length. Thus the sufficient condition of Proposition \ref{prop:elegseges} is automatically satisfied.
\end{proof}

In fact, in the case of Corollary \ref{cor:semibalanced}, it is not hard to show the stronger statement that the root polytope of $D$ is unimodularly equivalent to the extended root polytope of $D/e$.

\begin{ex}\label{ex:feloldasok}
    The triangle graph of Figure \ref{fig:triang}
    may be obtained by contracting $e_4$ in either of the semi-balanced quadrangles of Figure \ref{fig:feloldasok}. The root polytopes of both graphs are rectangles (and the extended root polytopes are cones over them), cf.\ the last panel of Figure \ref{fig:triang}. All three graphs have interior polynomial $1+x$.
\end{ex}

\begin{figure}
	\begin{tikzpicture}[scale=.80]
	\node [] (0) at (-0.8,0.7) {\small $e_1$};
	\node [] (0) at (1,-0.3) {\small $e_2$};
	\node [] (0) at (2.9,0.7) {\small $e_3$};
    \node [] (0) at (1,1.8) {\small $e_4$};
	\node [circle,scale=.6,draw] (1) at (-0.5,0) {};
	\node [circle,scale=.6,draw] (2) at (-.5,1.5) {};
	\node [circle,scale=.6,draw] (4) at (2.5,0) {};
    \node [circle,scale=.6,draw] (3) at (2.5,1.5) {}; 
	\path [thick,->,>=stealth] (1) edge [above] node {} (2);
	\path [thick,->,>=stealth] (1) edge [below] node {} (4);
	\path [thick,->,>=stealth] (3) edge [above] node {} (4);
    \path [thick,->,>=stealth] (3) edge [above] node {} (2);
 \begin{scope}[shift={(5,0.8)}]
    \node [] (0) at (0,0.8) {\small $e_1$};
	\node [] (0) at (0,-0.8) {\small $e_2$};
	\node [] (0) at (2,0.8) {\small $e_3$};
    \node [] (0) at (2,-0.8) {\small $e_4$};
	\node [circle,scale=.6,draw] (1) at (-0.5,0) {};
	\node [circle,scale=.6,draw] (2) at (1,1) {};
	\node [circle,scale=.6,draw] (4) at (2.5,0) {};
    \node [circle,scale=.6,draw] (3) at (1,-1) {};
	\path [thick,->,>=stealth] (1) edge [above] node {} (2);
	\path [thick,->,>=stealth] (3) edge [below] node {} (4);
	\path [thick,->,>=stealth] (2) edge [above] node {} (4);
 \path [thick,->,>=stealth] (1) edge [above] node {} (3);
	\end{scope}
        \end{tikzpicture}
\caption{Illustration for Example \ref{ex:feloldasok}.}\label{fig:feloldasok}
\end{figure}

\begin{remark}
    Proposition \ref{prop:equality_in_D/e}, or even just Corollary \ref{cor:semibalanced}, gives us a way to construct many different graphs with the same interior polynomial. For instance, 
    for a semi-balanced digraph $D$, the graphs $\{ D/e \mid e\in E(D)\}$ all have the same interior polynomial. 
\end{remark}

The situation described in Proposition \ref{prop:elegseges} is not the only one in which $h^*_{D/e}=h^*_D$ holds. In other words, the sufficient condition of the Proposition is not necessary. This is one thing we can learn from the following curious observation.

\begin{ex}\label{ex:teljes}
    The complete bipartite graph $K_{n,n}$ (with its so called \emph{standard orientation}, from one color class to the other) and the bidirected complete graph $K_n$ share the interior polynomial 
    \[q(x)=1+(n-1)^2x+\cdots+{n-1\choose i}^2x^i+\cdots+x^{n-1},\]
see \cite{hiperTutte,KP_Ehrhart} and \cite{arithm_symedgepoly}, respectively. For any perfect matching $M$ in $K_{n,n}$, the contraction 
$K_{n,n}/M$ is $K_n$. Therefore by Theorem \ref{thm:h_of_contraction}, if in $K_{n,n}$ we contract the edges in any subset of $M$, the resulting graph also has $q(x)$ for interior polynomial.

This holds despite the fact that if $n\geq4$, then after contracting any two elements of $M$, any of the remaining edges of $M$ is part of the longer (three-element) arc of a five-cycle. 
\end{ex}

So far in this section we started from some graph and looked for minors (specifically, edge contractions) that have the same interior polynomial. 
One may wonder about reversing this logic and looking for not smaller but bigger graphs with the same polynomial. 
We have not been overly successful at this, yet we will present some speculation to indicate the type of difficulty that arises.
Since Proposition \ref{prop:equality_in_D/e} offers far more interesting options than \ref{prop:equality_in_D-e}, we will now consider separating a vertex into a pair of vertices, assigning each incident edge to one of the two, and connecting the two new vertices with a new directed edge.

For instance, one may be tempted to generalize Example \ref{ex:teljes} by starting with a directed graph on $n$ vertices, doubling every vertex to an `upper' and a `lower' copy, connecting them by an edge from lower to upper, and lifting all other edges from the lower copy of their startpoint to the upper copy of their endpoint. The result is a bipartite graph on $n+n$ vertices, with standard orientation.
Such an operation may be worthwhile to study, however in general it will not preserve the interior polynomial, not even if we start from a bidirected graph. 

\begin{ex}
\label{ex:4-ciklus}
The bidirected four-cycle $C_4$ has the interior polynomial $1+5x+5x^2+x^3$, while the corresponding bipartite graph is the edge graph of the three-dimensional cube and has $1+5x+9x^2+x^3$.

In fact, in the case of $C_4$, it is not possible to separate even one vertex without changing the interior polynomial. In \cite{fokszam} we computed the degree of the interior polynomial and the result is stated as Theorem \ref{thm:degree} below. According to the formula, if the number of vertices increases from $4$ to $5$, the value of $\nu$ needs to increase from $0$ to $1$ too, for otherwise the degree of $h^*$ would change. In particular, our vertex needs to be separated in such a way that the resulting graph is not strongly connected anymore. There are two ways to do this: either by replacing one edge by a non-directed path of length $2$, or by creating two vertices of degree $3$, one of which is a sink and the other a source. In the former case the interior polynomial becomes $1+5x+8x^2+2x^3$, and in the latter $1+5x+7x^2+x^3$.
(For the reason why the coefficient of $x$ persists at $5$, see \cite[page 2]{fokszam}.)
\end{ex}

\begin{remark}
The observation of Example \ref{ex:4-ciklus} generalizes to the claim that if $G$ is a $2$-connected bidirected bipartite graph, then none of its vertices can be separated without changing the interior polynomial. (A cut-vertex, on the other hand, can always be separated, cf.\ the last claim of Proposition \ref{prop:elegseges}.) This is because the indicator function of either color class is an admissible layering for $G$, and it remains so after separating a vertex, too. The new edge, however, is not tight with respect to this layering, that is, the condition of Proposition \ref{prop:equality_in_D/e} (see also Proposition \ref{prop:facets}) is not met.
\end{remark}

In the next section we present a different generalization of Example \ref{ex:teljes}, one that does not fail.

\section{Gorenstein extended root polytopes}
\label{sec:gorenstein}

In this final section, we examine another important class of cases in which contracting some edges leads to no change in the interior polynomial. 
This will also be an instance of the phenomenon, discovered by Batyrev and Nill \cite{BN_comb_mirror_sym}, 
that certain projections of Gorenstein polytopes are reflexive and possess the same $h^*$-polynomial.

Gorenstein polytopes generalize the well known class of reflexive polytopes. For any polytope $P$ that is full dimensional in the real inner product space $U$ and contains the origin as an interior point, we set its \emph{dual polytope} to be 
\[P^*=\{u\in U\mid\langle u,x\rangle\geq-1\text{ for all }x\in P\}.\]

\begin{defn}
    A lattice polytope, containing $\mathbf{0}$ in its relative interior, is called \emph{reflexive} if its dual, with respect to the restriction of the standard dot product to the linear span of the polytope, is also a lattice polytope.
\end{defn}

In a reflexive polytope, the origin is the unique interior lattice point. By Hibi's result \cite[Theorem 4.6]{BeckRobbins}, a lattice polytope is reflexive if and only if its $h^*$-polynomial is palindromic with a degree that coincides with the dimension of the polytope.
Higashitani \cite[Proposition 1.4]{smooth_Fano} showed that $\tcQ_D$ is reflexive if and only if $D$ is totally cyclic, that is, each weakly connected component of $D$ is strongly connected. 

\begin{defn}[Gorenstein polytope \cite{BN_comb_mirror_sym}]
    A 
    lattice polytope $P$, 
    with respect to the positive integer $r$, is called a \emph{Gorenstein polytope of index $r$} if the dilation $rP$ contains a (relative) interior lattice point $\mathbf{p}$ so that
    the translation $rP-\mathbf{p}$ is a reflexive polytope.
\end{defn}

Reflexive polytopes are Gorenstein of index $1$. 
Hibi \cite{hibi_gorenstein} also 
proved that the lattice polytope $P$ is Gorenstein of index $r$ if and only if its $h^*$-polynomial is palindromic of degree $\dim P-r+1$ (see \cite{BrunsRoemer,Stanley78} as well). 
In particular, the index of a Gorenstein polytope is unique. 

Returning now to the specific theme of this paper,
we give the following characterization of Gorenstein extended root polytopes: 

\begin{thm}\label{thm:Gorenstein_root_poly_char}
Let $D$ be a directed graph. The extended root polytope $\tcQ_D$ is Gorenstein if and only if there exists a set
$K$ of edges of $D$ such that 
	\begin{enumerate}[label=(\roman*)]
		\item \label{egy} each elementary directed cut of $D$ contains exactly one edge of $K$,
		\item \label{ketto} each admissible layering $\ell$ of $D$ has $\ell(h)-\ell(t)=1$ for each edge $\overrightarrow{th}\in K$.
	\end{enumerate}
Moreover, in this case $K$ is a minimal cardinality dijoin, the index of $\tcQ_D$ is $|K|+1$, furthermore, all minimal cardinality dijoins $K$ of $D$ satisfy \ref{egy} and \ref{ketto}. 
\end{thm}

Before giving the proof, let us recall two results from \cite{fokszam}. These were proved for weakly connected graphs and here we generalize them for arbitrary digraphs.

\begin{thm}\label{thm:degree}
    For a directed graph $D$, the degree of $h^*_{D}$ is equal to $|V|-c(D)-\nu(D)$, where $c(D)$ is the number of weakly connected components of $D$ and $\nu(D)=\min\{|K| \mid K\text{ is a dijoin of }D \}$.
\end{thm}

\begin{proof}
    By \cite[Theorem 1.1]{fokszam}, for a connected digraph $D$, the degree of $h^*_{D}$ is equal to $|V|-1-\nu(D)$. Note that in general, $K$ is a dijoin in $D$ if and only if it is a union of dijoins taken from each weakly connected component. Now
    Proposition \ref{prop:szorzat} implies the claim.
\end{proof}

\begin{prop}\label{prop:leading_coefficient}
    For a directed graph $D$, the leading coefficient of $h^*_D$ is equal to the number of vectors that can be obtained as $\sum_{e\in K} \mathbf{x}_e$ for a minimal cardinality dijoin $K$ of $D$.
\end{prop}

\begin{proof}
    For a weakly connected graph $D$, \cite[Theorem 1.3]{fokszam} claims exactly the above statement. The general case follows by Proposition \ref{prop:szorzat}.
\end{proof}

\begin{proof}[Proof of Theorem \ref{thm:Gorenstein_root_poly_char}]
    By Theorem \ref{thm:degree}, the degree of $h^*_{D}$ is $|V|-c(D)-\nu(D)$.
    As the dimension of $\tcQ_D$ is $|V|-c(D)$, if it is Gorenstein, then it is Gorenstein of index $\nu(D)+1$, cf.\ 
    Hibi's result 
    \cite{hibi_gorenstein} quoted above. 
    In other words, $\tcQ_D$ being Gorenstein is equivalent to the statement that 
    $(\nu(D)+1)\tcQ_D$ contains 
    an interior lattice point $\mathbf{p}$, such that the translation $(\nu(D)+1)\tcQ_D-\mathbf{p}$ is a reflexive polytope. 

    By \cite[Theorem 1.3]{fokszam} the interior lattice points of $(\nu(D)+1)\tcQ_D$ are of the form $\sum_{e\in K} \mathbf{x}_e + \mathbf{0}$, where $K$ is a dijoin of the minimum cardinality $\nu(D)$. 
    In \cite{fokszam}, $D$ was assumed to be weakly connected, but the argument carries over to the general case. Alternatively, one can invoke \cite[Theorem 1.8]{fokszam}, which establishes the same claim for all regular matroids --- at that level of generality, connectedness is not an issue anymore.
    
    Let us then examine when $(|K|+1)\tcQ_D - \sum_{e\in K} \mathbf{x}_e$ is reflexive for some 
    subset $K$ of $E(D)$.
    Note that for a directed cut $C^*$ in $D$, we have $f_{C^*}(\sum_{e\in K} \mathbf{x}_e)=|C^*\cap K|$, as each edge  $e\in C^*\cap K$ contributes $f_{C^*}(\mathbf{x}_e)=1$, whereas $f_{C^*}(\mathbf{x}_e)=0$ for all $e\notin C^*$.
    Hence, by Proposition \ref{prop:facets}, the defining inequalities corresponding to the facets of $(|K|+1)\tcQ_D - (\sum_{e\in K} \mathbf{x}_e)$ are as follows:
\begin{itemize}
    \item For each elementary cut $C^*$ in $D$, we have 
    $-f_{C^*}\cdot \mathbf x\leq |C^* \cap K|$.
    \item For each admissible layering $\ell$ of $D$, we have $\ell\cdot\mathbf x\leq |K|+1-\sum_{e\in K}\ell\cdot \mathbf{x}_e$. 
    \end{itemize}
    As $f_{C^*}$ has coordinates that are 1, and $\ell$ has coordinates whose difference is 1, 
    the dual polytope has integer vertices if and only if $|C^* \cap K|=1$ for each elementary directed cut $C^*$, and $|K|+1-\sum_{e\in K}\ell\cdot \mathbf{x}_e=1$ for each admissible layering $\ell$. The latter condition is equivalent to requiring
    $\ell\cdot\mathbf x_e=1$ for each admissible layering $\ell$ and edge $e\in K$.
    
    Thus indeed, on the one hand, if $\tcQ_D$ is Gorenstein then there exists a dijoin $K$
    (necessarily of minimum cardinality)
    satisfying \ref{egy} and \ref{ketto}. On the other hand, if $K$ is a subset of $E(D)$ that satisfies \ref{egy} and \ref{ketto}, then $\sum_{e\in K} \mathbf{x}_e + \mathbf{0}\in(|K|+1)\tcQ_D$ is a lattice point 
    so that $(|K|+1)\tcQ_D - \sum_{e\in K} \mathbf{x}_e$ is reflexive; in particular $\tcQ_D$ is Gorenstein of index $|K|+1$. Now by \ref{egy} $K$ is obviously a dijoin, furthermore the uniqueness of the index implies that $|K|=\nu(D)$, i.e., that $K$ is a minimum cardinality dijoin. 
    
    We also need to show that if $\tcQ_D$ is Gorenstein, then each minimal cardinality dijoin $K$ of $D$ satisfies \ref{egy} and \ref{ketto}. As in this case, by palindromicity, the leading coefficient of $h^*_D$ is $1$, Proposition \ref{prop:leading_coefficient} implies that the vector $\sum_{e\in K} \mathbf{x}_e$ is the same internal lattice point of $(\nu(D)+1)\tcQ_D$ for each minimal cardinality dijoin\footnote{This is closely related to the last claim of \cite[Proposition 2.8]{BN_comb_mirror_sym} via Proposition \ref{prop:special_simplices_in_root_polytopes} below.}.
    Hence the above reasoning can be applied to an arbitrary minimal cardinality dijoin, to deduce that \ref{egy} and \ref{ketto} are satisfied.
\end{proof}

The characterization in Theorem \ref{thm:Gorenstein_root_poly_char}, of Gorenstein extended root polytopes, is far less natural from the point of view of graph theory than that of reflexive (extended root) polytopes. 
Still, 
some well known
results about Gorenstein polytopes have nice graph-theoretic interpretations in the root polytope setting.
One important notion in connection with Gorenstein polytopes is that of a special simplex:

\begin{defn}[special $(r-1)$-simplex \cite{BN_comb_mirror_sym}]
	Let $P$ be a 
    lattice polytope. A simplex $S$ spanned by $r$ affinely independent lattice points in $P$ is called a \emph{special $(r-1)$-simplex} of $P$ if each facet of $P$ contains exactly $r-1$ vertices of $S$.
\end{defn}

According to \cite[Proposition 3]{Sti98}, if $P$ is Gorenstein of index $r$ and has a regular unimodular triangulation, then it also has a special $(r-1)$-simplex.
In our context, one source of special simplices is the following. 

\begin{prop}\label{prop:special_simplices_in_root_polytopes}
If $\tcQ_D$ is a Gorenstein polytope (for a digraph $D$), and $K$ is a minimal cardinality dijoin in $D$, then 
$\tcQ_K$
is a special $\nu(D)$-simplex in $\tcQ_D$. 
\end{prop}

\begin{proof}
Proposition \ref{prop:facets} describes the facets of $\tcQ_D$ in terms of elementary directed cuts and admissible layerings.
By Theorem \ref{thm:Gorenstein_root_poly_char}, if $\tcQ_D$ is Gorenstein, then any minimal cardinality dijoin $K$ in $D$ is such that each elementary directed cut of $D$ contains exactly one edge of $K$, and each admissible layering $\ell$ of $D$ has $\ell(h)-\ell(t)=1$ for each edge $\overrightarrow{th}\in K$. 

This ensures that each facet corresponding to an admissible layering $\ell$ contains $\mathbf{x}_e$ for each $e\in K$. As it does not contain $\mathbf{0}$, facets corresponding to an admissible layering satisfy the requirement.

For a facet corresponding to an elementary directed cut $C^*$, the origin 
is in the facet, as well as $\mathbf{x}_e$ for $e\notin C^*$, while $\mathbf{x}_e$ for $e\in C^*$ is not in the facet. Thus also for any of these facets, there is exactly one vertex of $\tcQ_K$ that is not contained in it. 
\end{proof}

We note that not all special simplices in $\tcQ_D$ are of the above form. For example, for the (Gorenstein) digraph of Figure \ref{fig:triang}, the simplex spanned by $\mathbf{x}_{e_1}$ and $\mathbf{x}_{e_3}$ is also special, besides $\conv\{\mathbf0,\mathbf{x}_{e_2}\}$.

Special simplices are important because of 
the following theorem of Batyrev and Nill \cite{BN_comb_mirror_sym}. 

\begin{thm}\cite[Theorem 2.16]{BN_comb_mirror_sym} 
\label{thm:Gorenstein_special_projection}
Let $P$ be a $d$-dimensional Gorenstein polytope of index $r$, and suppose that $\Delta$ is a special $(r-1)$-simplex. Let $Q$ be the projection of $P$ along the affine hull of $\Delta$. Then $Q$ is a $(d-r+1)$-dimensional reflexive polytope, and $h^*_Q = h^*_P$.
\end{thm}

In particular, for extended root polytopes we get the following statement. For the convenience of the reader we include a new proof, specific to directed graphs.

\begin{thm}\label{thm:Gorenstein_special_projection_root_polytope}
Suppose that the extended root polytope $\tcQ_D$  of the digraph $D$ has the Gorenstein 
property. Let $K$ be a minimal cardinality dijoin of $D$, that is, $|K|=\nu(D)$. Then $\tcQ_{D/K}$ is a $(|V|-c(D)-\nu(D))$-dimensional reflexive polytope, furthermore $h^*_{D/K} = h^*_D$.
\end{thm}

The easy examples of Figures \ref{fig:triang} and \ref{fig:feloldasok} are all Gorenstein digraphs. In the case of the triangle, $\{e_2\}$ is a minimal dijoin; in either quadrilateral, both $\{e_1,e_3\}$ and $\{e_2,e_4\}$ are minimal dijoins. Contracting any of these dijoins results in a graph with two vertices and two oppositely oriented edges, the interior polynomial of which is still $1+x$.

Example \ref{ex:teljes} is also a special case of Theorem \ref{thm:Gorenstein_special_projection_root_polytope}, as can be seen from the palindromicity of $q(x)$. The fact that $M$ is a dijoin is apparent from the quotient graph $K_n=K_{n,n}/M$ being strongly connected (which is also the reason for the reflexivity of the quotient polytope); that $M$ is minimal follows from the necessity of covering all star-cuts. 

\begin{proof}[Proof of Theorem \ref{thm:Gorenstein_special_projection_root_polytope}]
    
    As $\tcQ_D$ is Gorenstein, Theorem \ref{thm:Gorenstein_root_poly_char} guarantees that it has index $\nu(D)+1$. For a minimal cardinality dijoin $K$ of $D$, each elementary directed cut contains exactly one edge of $K$ and each admissible layering $\ell$ of $D$ has $\ell(h)-\ell(t)=1$ for each $\overrightarrow{th}\in K$.
    
    The projection of $\tcQ_D$ along $\tcQ_K$ is $\tcQ_{D/K}$ (where $D/K$ is the digraph obtained from $D$ by contracting the edges in $K$). Since $K$ is a dijoin, $D/K$ has no directed cuts. In other words, each weakly connected component of $D/K$ is strongly connected. Thus, by \cite{smooth_Fano}, the polytope $\tcQ_{D/K}$ is reflexive. Moreover, since $K$ is easily seen to be cycle-free (cf.\ \cite[Lemma 2.13]{fokszam}), we have $|V(D/K)|=|V(D)|-|K|=|V(D)|-\nu(D)$. Hence the dimension of $\tcQ_{D/K}$ is 
    \[|V(D)|-\nu(D)-c(D/K)=(|V(D)|-c(D))-(\nu(D)+1)+1.\]

    It remains to show that $h^*_{D}=h^*_{{D/K}}$. For this, we will apply Proposition \ref{prop:equality_in_D/e} successively. To do so, we will need the following claim.

    \begin{claim}\label{cl:layerings_after_contraction}
    If some edges $f_1=\overrightarrow{t_1h_1}$ and $f_2=\overrightarrow{t_2h_2}$ have the property that $\ell(h_1)- \ell(t_1) = \ell(h_2)- \ell(t_2) = 1$ for each admissible layering $\ell$ of $D$, and there is no elementary directed cut containing both $f_1$ and $f_2$, then we have $\ell'(h_2)-\ell'(t_2)=1$ for each admissible layering $\ell'$ of $D/{f_1}$.   
    \end{claim}
    
    \begin{proof}
        For any function $\ell\colon V\to \mathbb{Z}$, let us call $\ell(h)-\ell(t)$ the 
        \emph{jump} of the edge $\overrightarrow{th}$. As before, if the 
        jump of an edge is $1$, we call it a tight edge with respect to $\ell$.
        Let $w$ be the vertex of $D/f_1$ obtained by contracting $f_1$ (that is, gluing together $t_1$ and $h_1$).
        
        Suppose for a contradiction that there exists an admissible layering $\ell'$ of $D/f_1$ such that $f_2$ is not tight. From $\ell'$, we can construct a layering $\ell$ of $D$ such that $\ell(v)=\ell'(v)$ for each vertex $v\neq w$, and $\ell(t_1)=\ell(h_1)=\ell'(w)$. Clearly, since $\ell'$ was admissible, $\ell$ satisfies the property $\ell(h)-\ell(t)\leq 1$ for each $\overrightarrow{th}$. However, since $\ell(h_1)-\ell(t_1)=0$, our assumption that $f_1$ is tight for each admissible layering implies that $\ell$ cannot be admissible. This means that the subgraph of tight edges for $\ell$ is not weakly connected, instead, because $\ell'$ is admissible, it is a subgraph of exactly two connected components. In other words, there exists 
        an elementary cut $C^*$ in $D$, 
        containing $f_1$ and
        consisting only of non-tight edges. 
        Let the shores of the cut be $V_0$ and $V_1$ with $t_1\in V_0$ and $h_1\in V_1$. 
        
        Next, consider the function  $\tilde{\ell}$ defined by
        $$
        \tilde{\ell}(v) = \left\{
        \begin{array}{cl}
	\ell(v)+1 & \text{ if } v\in V_1\\
	\ell(v) & \text{ if } v\in V_0 \\
        \end{array}\right.\ .
        $$
        This satisfies $\tilde{\ell}(h_1)-\tilde{\ell}(t_1)=1$. Moreover, the jump 
        of edges within $C^*$ is still at most $1$, while the 
        jumps of edges outside $C^*$ remain the same as in the case of $\ell$. As 
        there are no more cuts consisting of non-tight edges, 
        $\tilde{\ell}$ is an admissible layering. Since $f_1\in C^*$, and we assumed that no elementary directed cut contains both $f_1$ and $f_2$, we have $f_2\notin C^*$, wherefore $\tilde{\ell}(h_2)-\tilde{\ell}(t_2)=\ell'(h_2)-\ell'(t_2)<1$, which contradicts the assumption of the Claim on $f_2$ and the admissible layering $\tilde\ell$.
    \end{proof}

    Let $K=\{e_1, \dots , e_k\}$. Note that $D/K=(\dots ((D/e_1)/e_2)\dots)/e_k$. By Claim \ref{cl:layerings_after_contraction} and Proposition \ref{prop:equality_in_D/e}, we have $h^*_{D}=h^*_{{D/{e_1}}}=h^*_{{(D/{e_1})/e_2}}=\dots =h^*_{{D/K}}$.
\end{proof}

 In particular, by contracting edges of the minimal dijoin one by one, we find sequences of Gorenstein digraphs and corresponding polytopes that interpolate between $\tcQ_D$ and $\tcQ_{D/K}$. All interpolating polytopes have the same $h^*$-polynomial and their dimensions and Gorenstein indices both form arithmetic progressions of difference $-1$. 

\appendix
\section{Proof of Theorem \ref{thm:interior_poly_fixed_order}}

Here, we give a proof of Theorem \ref{thm:interior_poly_fixed_order}, which is a slight modification of the proof of \cite[Theorem 1.8]{fix_elsorrend} that claims the same formula for the case of a connected, bidirected $D$.

Let $P\subset\mathbb R^n$ be a lattice polytope, and let $\Delta_1, \dots, \Delta_s$ be a dissection of $P$ using no new vertices. We say that a point $\mathbf{q}$ (of the affine hull $A$ of $P$) is in \emph{general position} with respect to the dissection $\Delta_1, \dots, \Delta_s$ if $\mathbf{q}$ is not contained in any facet-defining hyperplane (with respect to $A$) of any of the simplices $\Delta_1, \dots, \Delta_s$.

For two points $\mathbf{p},\mathbf{q}\in \mathbb{R}^n$, let us denote by $[\mathbf p,\mathbf q]$ the closed segment connecting them, and let us denote by $(\mathbf{p},\mathbf{q})$ the relative interior of this segment.

We say that a point $\mathbf{p}\neq \mathbf q$ of a simplex $\Delta_i$ is \emph{visible} from $\mathbf q$ if $(\mathbf{p},\mathbf{q})$ is disjoint from $\Delta_i$. We say that a facet of $\Delta_i$ is visible from $\mathbf{q}$ if all points of the facet are visible from $\mathbf{q}$. 
It is easy to see that a facet of $\Delta_i$ is visible from $\mathbf q$ if and only if its hyperplane separates $\mathbf q$ from the (relative) interior of $\Delta_i$.

For a simplex $\Delta$, let $\Vis_{\mathbf q}(\Delta)$ be the set of facets of $\Delta$ that are visible from $\mathbf{q}$.
Let also $H_\mathbf{q}(\Delta) = \Delta - \bigcup_{F\in \Vis_{\mathbf{q}}(\Delta)} F$. I.e., we remove the visible facets from $\Delta$. 

The following Proposition from \cite{fix_elsorrend} is the key to proving Theorem \ref{thm:interior_poly_fixed_order}. 
It is motivated by \cite[Proposition 2.1]{arithm_symedgepoly}, and generalizes that result to unimodular dissections.

\begin{prop}\cite{fix_elsorrend} \label{prop:h^*-vector_from_visible_facets}
Let $\Delta_1, \dots, \Delta_s$ be a dissection of the $d$-dimensional lattice polytope $P\subset\mathbb R^n$ into unimodular simplices, and let $\mathbf{q}\in P$ be a point in general position with respect to the dissection. 
Then the $h^*$-polynomial $h^*(x)=h^*_d x^d +  \dots + h^*_1 x + h^*_0$ of $P$ 
has the coefficients
\[h^*_i = \left|\{ j \mid 1\le j\le s\text{ and } |\Vis_{\mathbf{q}}(\Delta_j)| = i\}\right|.\]
\end{prop}

\begin{remark}
Even though we no longer assume the dissection to be a triangulation, the condition of unimodularity remains, and it is crucial for the above statement to hold.
\end{remark}

\begin{proof}[Proof of Theorem \ref{thm:interior_poly_fixed_order}]
    The proof is a slight modification of the proof \cite[Theorem 1.8]{fix_elsorrend} which is in turn motivated by the proof of \cite[Proposition 4.6]{arithm_symedgepoly}, that concerns a special case: a concrete family of triangulations of symmetric edge  polytopes of complete bipartite graphs.
	The main engine of the proof is Proposition \ref{prop:h^*-vector_from_visible_facets}.
	
	Let us fix a dissecting forest set $\mathcal{F}$ for $D$ and fix an ordering of the edges of $D$ as well. We use the notation $e_1 < e_2 < \dots < e_m$, where $m=|E(D)|$.
	Our strategy is to find a point $\mathbf{q}$ in the interior of $\tcQ_D$, such that for each simplex $\tcQ_F$ of the dissection, the number of facets of $\tcQ_F$ visible from $\mathbf q$ is equal to the internal semi-passivity of the spanning forest $F$ with respect to $<$.
	
    Specifically, we choose the point
	$$\mathbf{q}= \sum_{i=1}^m \left(\frac{2^{-i}}{1+\sum_{j=1}^m 2^{-j}}\right) \mathbf{x}_{e_i}.$$
    It is a convex combination of $\mathbf0$ and the  $\mathbf{x}_e$ ($e\in E$), whence by definition $\mathbf q\in \tcQ_D$.
    In fact it is a relative interior point, which can be easily checked by plugging it into the equation of any facet, as given in Proposition \ref{prop:facets}.
	
	Next, we show that for any 
    $F\in\mathcal F$, 
    the number of facets of $\tcQ_F$, visible from $\mathbf q$, is equal to the internal semi-passivity of $F$ with respect to $<$. At the same time, it will also turn out that 
    $\mathbf q$ is in general position.
	
	Let $F\in \mathcal{F}$ be a spanning forest of the dissecting forest set. The simplex $\tcQ_F$ has $\mathcal{Q}_F$ as a facet. We claim that this facet is never visible from $\mathbf q$, for the simple reason that $\mathcal{Q}_F$ is part of the boundary of $\tcQ_D$. Indeed if it was not, then for the barycenter $\mathbf f$ (or any other relative interior point) of $\mathcal Q_F$ and a small enough $\varepsilon>0$, we would have $(1+\varepsilon)\mathbf f\in\tcQ_D\setminus\tcQ_F$, making it necessary for there to be another spanning forest $F'\in\mathcal F$ with $(1+\varepsilon)\mathbf f\in\tcQ_{F'}$. But then by definition, $(1-\varepsilon)\mathbf f$ would be both a point of $\tcQ_{F'}$ and an interior point of $\tcQ_F$, which would contradict the dissecting property of $\mathcal F$.
    
    The rest of the facets of $\tcQ_F$ are of the form $\tcQ_{F-e_k}=\conv\{ \mathbf{x}_{e_j} \mid e_j\in F-e_k\}$ for some edge $e_k\in F$.	
	We will show that the facet $\tcQ_{F-e_k}$ is visible from $\mathbf q$ if any only if $e_k$ is internally semi-passive in $F$. 
	Note that the hyperplane of $\tcQ_{F-e_k}$
	is described as the kernel of 
	the linear functional $f_{C^*(F,e_k)}$. Recall that for a cut $C^*$ with shores $V_0$ and $V_1$, the vector $f_{C^*}\in\mathbb{Z}^V$ is defined by $f_{C^*}(v)=1$ for $v\in V_1$ and $f_{C^*}(v)=0$ for $v\in V_0$. Let us suppose that $f_{C^*(F,e_k)}$ has value $1$ at the head of $e_k$ and value $0$ at its tail.
    Then $f_{C^*(F,e_k)}\cdot \mathbf{x}_{e_k} = 1$, moreover $f_{C^*(F,e_k)}\cdot \mathbf{x}_e = 0$ for $e\in F-e_k$, and $f_{C^*(F,e_k)}\cdot \mathbf{0} = 0$.
    Thus $\{\mathbf{x} \mid f_{C^*(F,e_k)}\cdot \mathbf{x}=0\}$ is indeed the hyperplane of $\tcQ_{F-e_k}$, and (as $f_{C^*(F,e_k)}\cdot \mathbf{x}_{e_k} = 1$) the facet $\tcQ_{F-e_k}$ of $\tcQ_F$ is visible from $\mathbf q$ if and only if $f_{C^*(F,e_k)}\cdot \mathbf q < 0$.
	Here by linearity, we have 
	$$f_{C^*(F,e_k)}\cdot \mathbf q= \sum_{i=1}^m \left(\frac{2^{-i}}{1+\sum_{j=1}^m 2^{-j}}\right)f_{C^*(F,e_k)}\cdot \mathbf{x}_{e_i}.$$
	Non-zero contributions to this sum come from the edges of $C^*(F,e_k)$. Namely, those $e_i$ in the cut
	that stand parallel to $e_k$ (that is, have their heads in the same shore as $e_k$) have $f_{C^*(F,e_k)}\cdot\mathbf{x}_{e_i} = 1$ and the edges $e_i$ of $C^*(F,e_k)$ standing opposite to $e_k$ 
	have $f_{C^*(F,e_k)}\cdot\mathbf{x}_{e_i} = -1$. Hence as $C^*(F,e_k)\ne\varnothing$, the sum is never $0$, in other words $\mathbf q$ is in general position with respect to the dissection. Furthermore, $f_{C^*(F,e_k)}\cdot\mathbf q<0$ 
	if and only if the smallest edge of $C^*(F,e_k)$, according to $<$, stands opposite to $e_k$, i.e., if and only if $e_k$ is internally semi-passive in $F$.
\end{proof}

\bigskip
\noindent
\textbf{Acknowledgements}
We thank Akihiro Higashitani and Max K\"olbl for stimulating conversations.
TK was supported by the Japan Society for the Promotion of Science (JSPS) Grant-in-Aid for Scientific Research C no.\ 23K03108.
LT was supported by the National Research, Development and Innovation Office of Hungary -- NKFIH, grant no.\ 132488, by the János Bolyai Research Scholarship of the Hungarian Academy of Sciences, and by the ÚNKP-23-5 New National Excellence Program of the Ministry for Innovation and Technology, Hungary. This work was also partially supported by the Counting in Sparse Graphs Lendület Research Group of the Alfr\'ed Rényi Institute of Mathematics.

\bibliographystyle{plain}
\bibliography{Bernardi}

\end{document}